\newtheorem{theorem}{Theorem}[section]
\newtheorem{lemma}[theorem]{Lemma}
\newtheorem{assumption}[theorem]{Assumption}
\theoremstyle{definition}
\newtheorem{definition}[theorem]{Definition}
\newtheorem{example}[theorem]{Example}
\newtheorem{remark}[theorem]{Remark}
\newcommand{\ba}{\begin{array}}
	\newcommand{\ea}{\end{array}}
\newcommand{\bea}{\begin{eqnarray}}
	\newcommand{\eea}{\end{eqnarray}}
\newcommand{\N}{\mathbb{N}}
\renewcommand{\P}{\mathbb{P}}
\newcommand{\R}{{\mathbb{R}}}
\numberwithin{equation}{section}
\begin{document}
	
	\title[Derivative-free LM algorithm via orthogonal spherical smoothing]
	{On the global complexity of a derivative-free Levenberg-Marquardt algorithm  via orthogonal spherical smoothing}

	\author{Xi Chen}
	\address{School of Mathematical Sciences,   Shanghai Jiao
		Tong 		University, Shanghai 200240,		China}
	\email{chenxi\_1998@sjtu.edu.cn}
	
	\author{Jinyan Fan*}
	\address{School of Mathematical Sciences,  and MOE-LSC, Shanghai Jiao
		Tong 		University, Shanghai 200240,		China}
	\email{jyfan@sjtu.edu.cn}
	
	\thanks{* Corresponding author}
	\thanks{The authors are supported by the Science and Technology
		Commission of Shanghai Municipality grant 22YF1412400 and the National Natural Science Foundation of China grant 12371307.}
	
	\subjclass[2010]{65K10, 15A18, 65F15, 90C22}
	
	\keywords{Levenberg-Marquardt method, derivative-free optimization, probabilistic gradient models,
		smoothing technique, global complexity}
	
	\maketitle

	\begin{abstract}
In this paper, we propose a derivative-free Levenberg-Marquardt algorithm for nonlinear least
squares problems, where the Jacobian matrices are approximated via orthogonal spherical
smoothing. It is shown that the gradient models which use the approximate Jacobian matrices
are probabilistically first-order accurate, and the high probability complexity bound of the algorithm is also  given.

	\end{abstract}

\section{Introduction}		
Nonlinear least squares problems appear frequently in engineering technology and scientific experiments. They have important applications in data fitting, parameter estimation and function
approximation.
In this paper, we consider the nonlinear least squares problem
	\begin{equation}\label{prob}
		\min_{x\in\mathbb{R}^n} \,
		f(x)=\frac{1}{2}\sum_{i=1}^{m}r_i(x)^2=\frac{1}{2}\|r(x)\|^2,
	\end{equation}
where   $r(x):\R^n\to \R^m$  is continuously differentiable but
its Jacobian is unavailable or expensive to compute, and $\|\cdot\|$ refers to
the standard Euclidian norm.
Thus, the traditional nonlinear least squares methods that use the exact Jacobian matrices are not applicable anymore.


 Brown et al. \cite{DFOLM} used coordinate-wise finite 	difference to approximate each element of the Jacobian and gave the derivative-free analogues of the Levenberg-Marquardt (LM) algorithm and Gauss-Newton algorithm for solving \eqref{prob}.
Some model-based derivative-free methods were also developed for \eqref{prob}.
For example, Zhang et al.\cite{ZhangHC} built quadratic interpolation models for each $r_i$ in (\ref{prob}) to  construct the approximate Jacobian matrix, and presented a framework for a class of derivative-free algorithms within a trust region;
Cartis et al. \cite{DFOGN} used linear interpolation models of all $r_i$ to build a quadratic model of the objective function, and presented a derivative-free version of the Gauss-Newton method, further  linear regression models of $r_i$ were used to improve the flexibility and robustness;
Recently, Zhao et al. \cite{zhao} used  random points generated by the standard Gaussian distribution  to construct probabilistically first-order accurate Jacobian models, and showed that the LM method based on such Jacobian models converges to a first-order stationary point of the objective function with probability one.
	
In recent years, zeroth-order optimization methods that approximate gradients or stochastic gradients via randomized smoothing technique have attracted increasing attention due to
their success in solving signal processing and machine learning/deep learning problems.
The randomized smoothing approach defines a smoothed version of a function by adding  random directions that obeys a specific distribution and then taking the expectation.
	That is, the smoothed version of a function $g$ at $x$ is in the form of
	\begin{equation}\label{in1}
		g_s(x)=	\mathbb{E}_u[g(x+\gamma u)],
		\end{equation}
	where $u$ and $\gamma$ are the random direction and  smoothing parameter, respectively.
		The derivative of $g_s(x)$ can be estimated by the following
		 sample average approximation
		\begin{align}\label{in2}
			\nabla \tilde g(x)=\frac{\phi}{\gamma} \sum_{i=1}^{b}[(g(x+\gamma u_i)-g(x))u_i],
		\end{align}
		where $u_i (i=1,\ldots,b)$ are random directions, $b$ is the sample size,  and
		$\phi$ is		a constant 		determined 	by the distribution of
		$\{u_i\}$ (cf. \cite{liuzhong}).
It can also be taken as the gradient approximation of $g$.

In \cite{Flax}, Flaxman et al. approximated the gradient of a function via spherical smoothing on online convex optimization in the bandit setting, i.e.,
the random directions are uniformly sampled from the unit sphere.
  Their idea was used in the zeroth-order SVRG method, the ZO-ADMM  and the gradient-free mirror descent method as well \cite{s3, s2, s1}.
    In \cite{DavidKozak},  Kozak et al. proposed a zeroth-order stochastic subspace algorithm,
where the random directions are not only uniformly sampled from the sphere but also
		orthogonal to each other.	
Besides, Nesterov et al. \cite{YN} designed a kind of random gradient-free oracle by using Gaussian smoothing, which takes independent random vectors from Gaussian distribution as random directions.
This approach has been used in the zeroth-order Varag method, the zeroth-order conditional gradient  method, the zeroth-order online	alternating direction method and so on \cite{2018, ZOVarag, Liu1}.
Recently, Feng et al. \cite{wang} studied the gradient and Hessian approximations by using random orthogonal frames sampled from the Stiefel's manifold.

The global complexity bound is an important factor to measure the efficiency of an algorithm.
Cartis et al. \cite{orcomplx2012} studied the oracle complexity for finite-difference versions of adaptive regularization algorithms with cubics.
	Garmanjani et al. \cite{WCC2} and Gratton  et al. \cite{WCC} gave the
	worse-case complexity  bounds  of their derivative-free  	trust-region
	methods respectively. Cao et al. \cite{Cao2023} derived the exponentially decaying tail
	bounds on the iteration complexity of a modified trust-region method designed for minimizing objective functions whose value and gradient and Hessian
estimates are computed with noise.

In this paper, we approximate the Jacobian matrices via the orthogonal spherical smoothing,
and propose a derivative-free analogue of the LM algorithm for nonlinear least squares problems \eqref{prob}.
We introduce a new definition of probabilistically first-order accurate gradient models
and show that the gradient models with Jacobian matrices approximated via the
orthogonal spherical smoothing are probabilistically first-order accurate if
the number of random perturbations is chosen appropriately.
	Further, the high probability complexity bound of the algorithm is
	derived.

 The paper is organized as follows. In Section \ref{sec2},
	we show how to construct approximate Jacobian matrices via orthogonal spherical smoothing.
	In Section 	\ref{sec3}, we propose a derivative-free LM algorithm based on the approximate Jacobian matrices and introduce a new definition of  probabilistically first-order accurate
	gradient models.
 The complexity analysis of the algorithm is given in Section	\ref{sec5}.
	Finally, some numerical experiments are presented in Section \ref{sec6}.


	\section{Approximating Jacobian matrices via orthogonal spherical smoothing}\label{sec2}
	In this section, we show how to approximate the Jacobian matrices via orthogonal spherical smoothing, and give a bound of the variance of approximate gradients.
	
Denote by $\Tilde{J}(x)$ the approximation of the Jacobian matrix $J(x)$, i.e.,
	\begin{align}\label{JA}
		\Tilde{J}(x)= \left(\nabla \Tilde{r}_1(x), \ldots,
		\nabla \Tilde{r}_m(x)\right)^T,
			\end{align}
	where $\nabla \Tilde{r}_i(x)$ is the approximation of 	$\nabla r_i(x)$.
Define the spherical smoothed version of $r_i(x)$ as
	\begin{equation}\label{spsmoo}
		r_{s,i}(x)=\mathbb{E}_{v\sim U(\mathbb B)} [r_i(x+\gamma v)],
	\end{equation}
	where $\mathbb B$ represents the unit ball in $\R^n$ and  $U(\mathbb B)$
	the uniform distribution on  $\mathbb B$.
It follows from \cite{Flax} that
	\begin{equation}\label{ssmooder}
		\nabla r_{s,i} (x)= \frac{n}{\gamma} \mathbb{E}_{u\sim U(\mathbb S)}
		[r_i(x+\gamma u)u],
	\end{equation}
	where $\mathbb S$ represents the unit sphere in $\R^n$ and $U(\mathbb S)$ the
	uniform distribution on $\mathbb S$. 	

		We take
	\begin{align}\label{nabr}
		\nabla \Tilde{r}_i(x)= \frac{n}{b} \sum_{j=1}^b \frac{r_i(x+\gamma
			u_j)-r_i(x)}{\gamma }u_j,
	\end{align}
	where $[u_1,\ldots,u_b]$ is  uniformly sampled from the
	Stiefel
	manifold $ St(n, b)=\{U\in \R^{n \times b}: U^TU=I_b\}$ with
	$1\leq b  \leq n$. (\ref{nabr}) can be regarded as a
	special case of  gradient approximation using  spherical 	smoothing
	described in \cite{Flax} and can be
	obtained by using Gram-Schmidt orthogonalization on $b$ independent random
	vectors with identical distribution
	$\mathcal{N}(0,I)$ (cf. \cite{wang,Chikuse2003}).
	By 	\cite{DavidKozak}, the  distribution for any such
	$u_j$ is
	uniform over
	$ \mathbb S   $.
	Thus,
	\begin{align}
\mathbb{E}_u [\nabla
	\Tilde{r}_i(x)] =\frac{n}{b} \sum_{j=1}^b \mathbb{E}_u
	\left[\frac{r_i(x+\gamma
		u_j)-r_i(x)}{\gamma }u_j\right]	=\frac{1}{b} \sum_{j=1}^b \nabla
	r_{s,i}(x)=\nabla  r_{s,i}(x).
\end{align}

	Denote the stochastic approximation of the gradient of the objective function $\nabla f(x)$ by $\nabla
	\tilde{f} (x)
	=\tilde{J}(x)^Tr(x)$.
In the following, we estimate the error between $\nabla f(x)$ and  $\nabla \Tilde{f}(x)$.	
Note that
	\begin{equation}\label{2.1.1}
		\left\|\nabla \tilde{f}(x)-\nabla f(x)\right\| \leq \left\|\nabla
		\tilde{f}(x)-\mathbb{E}_u[\nabla\tilde{f}(x)]\right\| +\left\|\nabla
		f(x)- \mathbb{E}_u[\nabla\tilde{f}(x)] \right\|.
	\end{equation}
	The first term on the right side of (\ref{2.1.1})  is
	related to the variance of approximate gradient $\nabla \tilde{f}(x)$,
while the second term is the difference  between the gradient and the expectation of approximate gradient.
	The bounds for these two terms will be derived respectively.

We make the following   assumptions.
Without loss of generality, we assume $\|r_0\|=\|r(x_0)\| \ne 0$ throughout the paper.

	\begin{assumption}\label{ass:2} $J(x)$ is Lipschitz continuous, i.e.,
		there exists $\kappa_{lj}>0 $ such that
		\begin{equation}\label{lipj}
			\|J(x)-J(y)\| \leq \kappa_{lj} \|x-y\|,\quad \forall x,y \in \mathbb{R}^n.
		\end{equation}
	\end{assumption}
\eqref{lipj} implies that there exist $\kappa_i>0 (i=1,\ldots,m) $ such that
\begin{align}\label{li}
		\|\nabla r_i(x)- \nabla r_i(y)\| \leq \kappa_i \|x-y\|, \quad \forall x,y \in \mathbb{R}^n.
\end{align}
Let  $\kappa_{\max}=\displaystyle\max_{i=1,\ldots,m} \kappa_{i}$.

	\begin{lemma}\label{lem2.6}
	Suppose that Assumption \ref{ass:2} holds. Then
		\begin{align}\label{lem2eq}
			\|\mathbb{E}_u [\nabla \tilde{f}(x)]-\nabla f(x)\| \leq
			\frac{\sqrt{m}n	\kappa_{\max} \gamma}{n+1} \|r(x)\|.
		\end{align}
		
	\end{lemma}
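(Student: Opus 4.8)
The plan is to reduce the claim to a row-wise estimate comparing the smoothed gradient $\nabla r_{s,i}(x)$ with the true gradient $\nabla r_i(x)$, and then aggregate over the $m$ components. First, since $r(x)$ does not depend on the random frame $u$, linearity of expectation gives $\mathbb{E}_u[\nabla\tilde f(x)] = \mathbb{E}_u[\tilde J(x)]^T r(x)$, and the identity $\mathbb{E}_u[\nabla\tilde r_i(x)] = \nabla r_{s,i}(x)$ already established after \eqref{nabr} shows that the $i$-th row of $\mathbb{E}_u[\tilde J(x)]$ equals $\nabla r_{s,i}(x)^T$. Writing $J_s(x)$ for the matrix with these rows, I obtain
\[
\mathbb{E}_u[\nabla\tilde f(x)] - \nabla f(x) = (J_s(x)-J(x))^T r(x),
\]
so that by submultiplicativity of the spectral norm,
\[
\|\mathbb{E}_u[\nabla\tilde f(x)] - \nabla f(x)\| \le \|J_s(x)-J(x)\|\,\|r(x)\|.
\]

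Next I would bound the operator norm of $J_s(x)-J(x)$ through its rows. Differentiating the definition \eqref{spsmoo} under the expectation sign, which is justified by the continuous differentiability of each $r_i$, yields $\nabla r_{s,i}(x) = \mathbb{E}_{v\sim U(\mathbb B)}[\nabla r_i(x+\gamma v)]$, and hence
\[
\nabla r_{s,i}(x) - \nabla r_i(x) = \mathbb{E}_v[\nabla r_i(x+\gamma v) - \nabla r_i(x)].
\]
Applying Jensen's inequality to move the norm inside the expectation and then the Lipschitz bound \eqref{li} gives
\[
\|\nabla r_{s,i}(x) - \nabla r_i(x)\| \le \kappa_i\,\gamma\,\mathbb{E}_v[\|v\|] \le \kappa_{\max}\,\gamma\,\mathbb{E}_v[\|v\|].
\]

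The only genuinely computational step is the radial moment $\mathbb{E}_{v\sim U(\mathbb B)}[\|v\|]$. Since the uniform law on the unit ball makes $\|v\|$ have density $n\rho^{n-1}$ on $[0,1]$, I would compute $\mathbb{E}_v[\|v\|]=\int_0^1 \rho\cdot n\rho^{n-1}\,d\rho = n/(n+1)$, which is exactly the source of the factor $n/(n+1)$ in the final bound. With each row of $J_s(x)-J(x)$ thus bounded by $n\kappa_{\max}\gamma/(n+1)$, I would aggregate via the Frobenius norm, using $\|A\|\le\|A\|_F\le\sqrt{m}\,\max_i\|a_i\|$ for an $m\times n$ matrix with rows $a_i^T$, to conclude $\|J_s(x)-J(x)\| \le \sqrt{m}\,n\kappa_{\max}\gamma/(n+1)$; combining this with the submultiplicativity estimate above produces precisely \eqref{lem2eq}. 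I expect the main obstacles to be the two technical points underpinning the row estimate, namely justifying the differentiation under the expectation that gives the integral representation of $\nabla r_{s,i}$, and correctly evaluating the radial moment $n/(n+1)$, whereas the matrix-norm manipulations are routine.
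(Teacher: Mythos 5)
Your proof is correct and reaches exactly the bound \eqref{lem2eq}; its skeleton (a per-component bias estimate of $\tfrac{n\kappa_i\gamma}{n+1}$, aggregated with a $\sqrt m$ loss against $\|r(x)\|$) coincides with the paper's. The genuine difference is in how the two ingredients are obtained. Where the paper simply cites \eqref{lem1eq} from \cite{wang}, you derive it from first principles: differentiating \eqref{spsmoo} under the expectation to get $\nabla r_{s,i}(x)=\mathbb{E}_{v\sim U(\mathbb B)}[\nabla r_i(x+\gamma v)]$, applying Jensen and the Lipschitz bound \eqref{li}, and evaluating the radial moment $\mathbb{E}_{v\sim U(\mathbb B)}[\|v\|]=\int_0^1 \rho\, n\rho^{n-1}\,d\rho=n/(n+1)$ — all of which is sound (the interchange of $\nabla$ and $\mathbb{E}_v$ is harmless here since $\nabla r_i$ is Lipschitz, hence locally bounded, and the ball is compact), and this makes the lemma self-contained rather than dependent on an external reference; you do implicitly rely on the identity $\mathbb{E}_u[\nabla\tilde r_i(x)]=\nabla r_{s,i}(x)$ established after \eqref{nabr}, which is fine. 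For the aggregation, you pass through $\|(J_s-J)^T r\|\le\|J_s-J\|\,\|r\|\le\|J_s-J\|_F\,\|r\|\le\sqrt m\,\max_i\|\nabla r_{s,i}-\nabla r_i\|\,\|r\|$, whereas the paper uses the triangle inequality over $\sum_i(\mathbb{E}_u[\nabla\tilde r_i]-\nabla r_i)r_i$ followed by $\|r\|_1\le\sqrt m\,\|r\|$ as in \eqref{norm1-2}; the two routes are equivalent in sharpness (your Frobenius step is essentially Cauchy--Schwarz where the paper uses H\"older), so neither buys a better constant, but yours is arguably cleaner to state at the matrix level.
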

	\begin{proof}
		It follows from \cite{wang} that
\begin{equation}\label{lem1eq}
			\|\mathbb{E}_u [\nabla \tilde{r}_i(x)] - \nabla r_i(x)\| \leq
			\frac{n
				\kappa_{i}\gamma }{n+1}.
		\end{equation}
Hence,
		\begin{align*}
			\|\mathbb{E}_u[\nabla \tilde{f}(x)]-\nabla f(x)\| &=
			\|\mathbb{E}_u[\tilde{J}(x)^Tr(x)]-J^T(x) r(x)\|\\
			& =\left\|\mathbb{E}_u
			\left[\sum_{i=1}^m
			\nabla
			\tilde{r}_i(x) r_i(x)\right]-\sum_{i=1}^m \nabla r_i(x) r_i(x)\right\| \\
			&=\left\|\sum_{i=1}^m\left( \mathbb{E}_u[\nabla  \tilde{r}_i(x)] -
			\nabla  r_i(x)\right) r_i(x) \right\| \\
			& \leq \sum_{i=1}^m \|
			\mathbb{E}_u[\nabla  \tilde{r}_i(x)] -  \nabla  r_i(x)\| |r_i(x) |\\
&\leq			\sum_{i=1}^m
			\frac{n \kappa_{i}\gamma}{n+1}|r_i(x)|\\
& = \frac{n \kappa_{\max} \gamma}{n+1} \|r(x)\|_1.
		\end{align*}
This, together with
 \begin{align}\label{norm1-2}
 \|r(x)\|_1 \leq \sqrt{m}\|r(x)\|,
 \end{align}
	yields   	 \eqref{lem2eq}.
	\end{proof}


	\begin{lemma}\label{lem31}
	Suppose that Assumption \ref{ass:2} holds. Then
		\begin{align}\label{lem3eq}
			&\mathbb{E}_u \left[ \left\|\nabla \tilde{f}(x)-\mathbb{E}_u
			[\nabla
			\tilde{f}(x)] 	\right\|^2\right] \nonumber\\
			&\leq \left(\frac{n}{b}- 1\right)
			\|\nabla f(x)\|^2 + \left(\frac{n}{b}- 1\right) \sqrt{m}n\kappa_{\max}\gamma
			\|r(x)\| 	\|\nabla
			f(x)\| + \frac{m n^2 \kappa_{\max}^2 \gamma^2}{4b}\|r(x)\|^2.
		\end{align}
	\end{lemma}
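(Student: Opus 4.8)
The plan is to exploit the orthonormality of the sampled columns $u_1,\dots,u_b$ to collapse the squared norm into a single scalar second moment, and then to linearize that moment in $\gamma$. First I would rewrite the approximate gradient as a weighted sum over the directions. Since $\nabla\tilde f(x)=\sum_{i=1}^m r_i(x)\nabla\tilde r_i(x)$ and each $\nabla\tilde r_i(x)=\frac{n}{b}\sum_{j=1}^b\frac{r_i(x+\gamma u_j)-r_i(x)}{\gamma}u_j$, interchanging the sums gives
\begin{equation*}
\nabla\tilde f(x)=\frac{1}{b}\sum_{j=1}^b h(u_j)\,u_j,\qquad h(u):=\frac{n}{\gamma}\sum_{i=1}^m r_i(x)\big(r_i(x+\gamma u)-r_i(x)\big).
\end{equation*}
Because $u_j^Tu_k=\delta_{jk}$ on the Stiefel manifold, every cross term in $\|\nabla\tilde f(x)\|^2$ vanishes deterministically, so $\|\nabla\tilde f(x)\|^2=\frac{1}{b^2}\sum_{j=1}^b h(u_j)^2$. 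Using that each $u_j$ is marginally uniform on $\mathbb S$ (as recorded just before the lemma), this yields the clean identity $\mathbb{E}_u[\|\nabla\tilde f(x)\|^2]=\frac{1}{b}\,\mathbb{E}_{u\sim U(\mathbb S)}[h(u)^2]$, whence
\begin{equation*}
\mathbb{E}_u\big[\|\nabla\tilde f(x)-\mathbb{E}_u[\nabla\tilde f(x)]\|^2\big]=\tfrac{1}{b}\,\mathbb{E}_{u}[h(u)^2]-\|\mu\|^2,\qquad \mu:=\mathbb{E}_u[\nabla\tilde f(x)].
\end{equation*}

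Next I would linearize $h$. A first-order Taylor expansion controlled by the gradient-Lipschitz bound \eqref{li} gives $r_i(x+\gamma u)-r_i(x)=\gamma\nabla r_i(x)^Tu+R_i(u)$ with $|R_i(u)|\le\frac12\kappa_i\gamma^2$, so that, using $\sum_i r_i\nabla r_i=\nabla f$ and \eqref{norm1-2},
\begin{equation*}
h(u)=n\,\nabla f(x)^Tu+E(u),\qquad |E(u)|\le \tfrac{n\gamma\kappa_{\max}}{2}\|r(x)\|_1\le \tfrac{\sqrt m\,n\kappa_{\max}\gamma}{2}\|r(x)\|=:B.
\end{equation*}
Writing $a(u):=\nabla f(x)^Tu$ and $\epsilon:=\mathbb{E}_u[E(u)u]$, the spherical identity $\mathbb{E}_u[uu^T]=\frac1n I$ together with $\mathbb{E}_u[|a(u)|]\le\|\nabla f(x)\|$ yields $\mu=\nabla f(x)+\epsilon$ and $\mathbb{E}_u[a^2]=\|\nabla f(x)\|^2/n$.

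The decisive step is that the first-order cross terms cancel rather than merely being bounded. Substituting $h=na+E$ and $\mu=\nabla f+\epsilon$ into the variance identity, the linear-in-$E$ contributions organize as $\frac{2n}{b}\mathbb{E}_u[aE]-2\nabla f(x)^T\epsilon$; since $\nabla f(x)^T\epsilon=\mathbb{E}_u[E\,\nabla f(x)^Tu]=\mathbb{E}_u[aE]$, this collapses to $2(\frac{n}{b}-1)\mathbb{E}_u[aE]$. Hence
\begin{equation*}
\mathbb{E}_u\big[\|\nabla\tilde f(x)-\mu\|^2\big]=\Big(\tfrac{n}{b}-1\Big)\|\nabla f(x)\|^2+2\Big(\tfrac{n}{b}-1\Big)\mathbb{E}_u[aE]+\Big(\tfrac1b\mathbb{E}_u[E^2]-\|\epsilon\|^2\Big).
\end{equation*}
Because $b\le n$, the factor $\frac{n}{b}-1\ge0$, so I can bound $\mathbb{E}_u[aE]\le B\,\mathbb{E}_u[|a|]\le B\|\nabla f(x)\|$, drop $-\|\epsilon\|^2\le0$, and use $\mathbb{E}_u[E^2]\le B^2$; recalling $2B=\sqrt m\,n\kappa_{\max}\gamma\|r(x)\|$ and $B^2/b=\frac{mn^2\kappa_{\max}^2\gamma^2}{4b}\|r(x)\|^2$ then reproduces \eqref{lem3eq} exactly.

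I expect the main obstacle to be recognizing and justifying the two structural facts on which everything hinges: that the Stiefel orthonormality makes the off-diagonal terms vanish identically, so no covariance between distinct directions must be estimated and the bound retains its $O(1/b)$ scaling; and that the two first-order error terms combine through $\nabla f(x)^T\epsilon=\mathbb{E}_u[aE]$. A naive route that bounds $\mathbb{E}_u[h^2]$ and $\|\mu\|^2$ separately overshoots the coefficient of the mixed term, so this cancellation is precisely what produces the stated factor $(\frac{n}{b}-1)$. The remaining ingredients, namely the Taylor remainder estimate and the standard spherical identities, are routine.
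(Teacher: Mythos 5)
Your proof is correct and follows essentially the same route as the paper: both compute the variance as $\mathbb{E}_u[\|\nabla\tilde f\|^2]-\|\mathbb{E}_u[\nabla\tilde f]\|^2$, split the difference quotients into a linear term plus a remainder, use the Stiefel orthogonality together with $\mathbb{E}_u[u_ju_j^T]=\frac{1}{n}I$, and rely on the same cancellation of the first-order cross terms to produce the factor $\frac{n}{b}-1$. Your scalar reformulation via $h(u_j)$ is a tidier bookkeeping of the identical computation, and your Lipschitz-gradient remainder bound $|R_i(u)|\le\frac{1}{2}\kappa_i\gamma^2$ is in fact slightly more faithful to Assumption \ref{ass:2} than the paper's exact second-order Taylor expansion with $r''_i(\xi_i)$, which tacitly assumes twice differentiability.
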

	\begin{proof}
It holds that
		\begin{align*}
			\mathbb{E}_u\left[ \left\|\nabla \tilde{f}(x)-\mathbb{E}_u [\nabla
			\tilde{f}(x)] 	\right\|^2\right] =\mathbb{E}_u \left[ \| \nabla
			\tilde{f}(x)
			\|^2\right] -\|\mathbb{E}_u [\nabla \tilde{f}(x)] \|^2.
		\end{align*}
By the Taylor expansion, for $i=1,\ldots,m$,
\begin{align*}
\frac{r_i(x+\gamma u_j)-r_i(x)}{\gamma} = \nabla
		r_i(x)^T
		u_j+ \frac{\gamma }{2} u_j^T r''_i(\xi_i)u_j,
\end{align*}
where $\xi_i\in [x,x+\gamma u_j]$.
 Denote
\begin{align*}
 \phi_j=u_j^T \left(
		\sum_{i=1}^m r''_i(\xi_i) r_i(x)\right) u_j, \quad j=1,\ldots, b.
\end{align*}
	Then  by \eqref{nabr},
\begin{align*}
	&\mathbb{E}_u \left[ \| \nabla \tilde{f}(x)
	\|^2\right] =  \mathbb{E}_u \left[ \| \tilde{J}(x)^T r(x)
	\|^2\right]
	=  \mathbb{E}_u \left[ \left\| \sum_{i=1}^m \nabla \tilde{r}_i(x)
	r_i(x)	
	\right\|^2\right]  \nonumber\\
	&=\mathbb{E}_u \left[\left\|  \sum_{i=1}^m \left( \frac{n}{b}
	\sum_{j=1}^b
	\frac{r_i(x+\gamma u_j)-r_i(x)}{\gamma} u_j  \right) r_i(x)
	\right\|^2\right]
	\nonumber\\
	&= \frac{n^2}{b^2} \mathbb{E}_u \left[\left\|  \sum_{i=1}^m
	\sum_{j=1}^b
	\left( 	 \nabla r_i(x)^T
	u_j+ \frac{\gamma }{2} u_j^T r''_i(\xi_i)u_j  \right)u_j r_i(x)
	\right\|^2\right]
	\nonumber\\
	&=\frac{n^2}{b^2} \mathbb{E}_u \left[\left\| \sum_{j=1}^b
	u_j^T\left(
	\sum_{i=1}^m  \nabla r_i(x)  r_i(x)\right)
	u_j+ \frac{\gamma}{2}  \sum_{j=1}^b u_j^T  \left(
	\sum_{i=1}^m r''_i(\xi_i) r_i(x)\right)  u_j u_j
	\right\|^2\right]
	\nonumber\\
	&=\frac{n^2}{b^2} \mathbb{E}_u \left[\left\| \sum_{j=1}^b
	\left(u_j^T
	\nabla f(x)\right) u_j+ \frac{\gamma}{2}  \sum_{j=1}^b \phi_j u_j
	\right\|^2\right] \nonumber\\
	&= \frac{n^2}{b^2} \mathbb{E}_u \left[\left\| \sum_{j=1}^b
	\left(u_j^T
	\nabla f(x)\right) u_j\right\|^2 \right] +
	\frac{n^2}{b^2}\mathbb{E}_u
	\left[ \gamma \left(
	\sum_{j=1}^b \left(u_j^T
	\nabla f(x)\right) u_j^T\right)\left(\sum_{j=1}^b \phi_j u_j
	\right)\right]   \nonumber\\
	& \quad +\frac{n^2}{b^2}\mathbb{E}_u \left[
	\frac{\gamma^2}{4} \left(\sum_{j=1}^b \phi_j u_j^T  \right)
	\left(\sum_{j=1}^b
	\phi_j u_j  \right)  \right].
\end{align*}

 By \eqref{li} and \eqref{norm1-2},
		\begin{equation}\label{R}
			|\phi_j| \leq \|u_j\|^2
			\left\|\sum_{i=1}^m r''_i(\xi_i) r_i(x)\right\| \leq \sum_{i=1}^m
			\|r''_i(\xi_i)\| |
			r_i(x)| \leq \sqrt{m} \kappa_{\max} \|r(x)\|.
		\end{equation}
It follows from \cite{wang} that
		\begin{equation}\label{uu}
			\mathbb{E}_u [u_j u_j^T]=\frac{1}{n}I.
		\end{equation}
		Since $u_j^Tu_k=0$ for all $j\not=k, j,k=1,\ldots,b$, by \eqref{R} and \eqref{uu},
		\begin{align}\label{tem1}
			&\mathbb{E}_u \left[ \| \nabla \tilde{f}(x)
			\|^2\right] \nonumber\\
			& =\frac{n^2}{b^2}\sum_{j=1}^b\mathbb{E}_u \left[ \nabla
			f(x)^T u_j u_j^T \nabla f(x) \right] + \frac{n^2\gamma}{b^2}
			\sum_{j=1}^b  \mathbb{E}_u \left[ \nabla 	f(x)^T u_j \phi_j
			\right]  +
			\frac{n^2\gamma^2}{4b^2}  \sum_{j=1}^b \mathbb{E}_u \left[ \phi_j^2
			\right]
			\nonumber\\
			& \leq  \frac{n}{b}\|\nabla f(x)\|^2 +
			\frac{n^2\gamma}{b^2}
			\sum_{j=1}^b  \mathbb{E}_u \left[ \nabla 	f(x)^T u_j \phi_j \right]
			+ \frac{m n^2 \kappa_{\max}^2\gamma^2}{4b}\|r(x)\|^2.
		\end{align}
		
		Now we consider $\|\mathbb{E}_u [\nabla \tilde{f}(x) ] \|^2$. By \eqref{uu},
		\begin{align}\label{tem2}
			&	\|\mathbb{E}_u [\nabla \tilde{f}(x)] \|^2 = 	\left\|\mathbb{E}_u 	\left[ \sum_{i=1}^m
			\nabla \tilde{r}_i(x) r_i(x)\right] \right\|^2\nonumber \\
			& = \left\| \mathbb{E}_u \left[
			\frac{n}{b}
			\sum_{i=1}^m \left(  \sum_{j=1}^b
			\frac{r_i(x+\gamma u_j)-r_i(x)}{\gamma} u_j  \right) r_i(x)  \right]
			\right\|^2  \nonumber \\
			& = \frac{n^2}{b^2} \left\| \mathbb{E}_u \left[ \sum_{i=1}^m
			\sum_{j=1}^b
			\left( 	 \nabla r_i(x)^T
			u_j+ \frac{\gamma}{2}  u_j^T r''_i(\xi_i)u_j  \right)u_j r_i(x)
			\right] \right\|^2
			\nonumber\\
			&=\frac{n^2}{b^2} \left\|\mathbb{E}_u \left[ \sum_{j=1}^b
			u_j^T\left(
			\sum_{i=1}^m  \nabla r_i(x)  r_i(x)\right)
			u_j+ \frac{\gamma}{2}  \sum_{j=1}^b u_j^T  \left(
			\sum_{i=1}^m r''_i(\xi_i) r_i(x)\right)  u_j u_j
			\right]\right\|^2
			\nonumber\\
			&=\frac{n^2}{b^2}\left\| \mathbb{E}_u \left[ \sum_{j=1}^b
			\left(u_j^T
			\nabla f(x)\right) u_j\right] + \frac{\gamma}{2}
			\sum_{j=1}^b\mathbb{E}_u
			\left[ \phi_j u_j \right] \right\|^2\nonumber\\
			&=\frac{n^2}{b^2} \left\|\sum_{j=1}^b \mathbb{E}_u \left[
			u_ju_j^T
			\nabla f(x) \right] \right\|^2 + \frac{n^2\gamma}{b^2}  \left(
			\sum_{j=1}^b
			\mathbb{E}_u \left[  u_ju_j^T \nabla f(x) \right] \right)^T
			\left(
			\sum_{j=1}^b\mathbb{E}_u 	\left[ \phi_j u_j \right] \right)
			\nonumber\\
			& \quad + \frac{n^2 \gamma^2}{4b^2}  \left(
			\sum_{j=1}^b\mathbb{E}_u 	\left[ \phi_j u_j \right]
			\right)^T\left(
			\sum_{j=1}^b\mathbb{E}_u 	\left[ \phi_j u_j \right]
			\right)\nonumber\\
			& = \frac{n^2}{b^2}\left\| \frac{b}{n} \nabla
			f(x)\right\|^2 +
			\frac{n\gamma}{b} \nabla f(x)^T \left(
			\sum_{j=1}^b\mathbb{E}_u 	\left[ \phi_j u_j \right] \right)
			\nonumber\\
			& \qquad +
			\frac{n^2
				\gamma^2}{4b^2}  \left(
			\sum_{j=1}^b\mathbb{E}_u 	\left[ \phi_j u_j \right]
			\right)^T\left(
			\sum_{j=1}^b\mathbb{E}_u 	\left[ \phi_j u_j \right] \right)
			\nonumber\\
			& = \|\nabla f(x)\|^2+ \frac{n\gamma}{b}
			\sum_{j=1}^b  \mathbb{E}_u \left[ \nabla 	f(x)^T u_j \phi_j
			\right]+
			\frac{n^2
				\gamma^2}{4b^2}  \left(
			\sum_{j=1}^b\mathbb{E}_u 	\left[ \phi_j u_j \right]
			\right)^T\left(
			\sum_{j=1}^b\mathbb{E}_u 	\left[ \phi_j u_j \right] \right).
		\end{align}
		
		Combining \eqref{R}, (\ref{tem1}) and (\ref{tem2}), we obtain
		\begin{align*}
			&	\mathbb{E}_u\left[ \left\|\nabla \tilde{f}(x)-\mathbb{E}_u
			[\nabla
			\tilde{f}(x)] 	\right\|^2\right] =\mathbb{E}_u \left[ \| \nabla
			\tilde{f}(x)
			\|^2\right] -\|\mathbb{E}_u [\nabla \tilde{f}(x)] \|^2 \\
			& \leq \left(\frac{n}{b}- 1\right) \|\nabla f(x)\|^2 +
			\frac{n}{b}\left(\frac{n}{b}- 1 \right) \gamma
			\sum_{j=1}^b  \mathbb{E}_u \left[ \nabla 	f(x)^T u_j \phi_j \right]
			+ \frac{m n^2 \kappa_{\max}^2 \gamma^2}{4b}\|r(x)\|^2\\
			& \leq \left(\frac{n}{b}- 1\right) \|\nabla f(x)\|^2 +
			\frac{n}{b}\left(\frac{n}{b}- 1 \right) \gamma \|\nabla f(x)\|	
			\sum_{j=1}^b  \mathbb{E}_u[\|u_j\| |\phi_j|]+ \frac{m n^2 \kappa_{\max}^2
				\gamma^2}{4b}\|r(x)\|^2 \\
			& \leq     \left(\frac{n}{b}- 1\right)
			\|\nabla
			f(x)\|^2 + \left(\frac{n}{b}- 1\right) \sqrt{m}n\kappa_{\max}\gamma\|\nabla
			f(x)\| \|r(x)\|
			+ \frac{m n^2 \kappa_{\max}^2\gamma^2}{4b}\|r(x)\|^2.
		\end{align*}
	\end{proof}

		\begin{lemma}\label{lem2.1.1}
			Suppose that Assumptions \ref{ass:2} holds.
 If $b \ge \frac{n}{2}$, then for any $s>0$,
			\begin{equation}\label{lem2.1.1eq}
				\mathbb{P}\left( \left\|\nabla
				\tilde{f}(x)-\mathbb{E}_u[\nabla\tilde{f}(x)]\right\| \leq s
				\right)
				\ge 1- \frac{\left(\left(\frac{n}{b}- 1\right)^{1/2}
					\|\nabla f(x)\|+ \frac{\sqrt{m}n\kappa_{\max}\gamma}{2} \|r(x)\|
					\right)^2}{s^2}.
			\end{equation}

		\end{lemma}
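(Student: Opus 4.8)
The plan is to combine the second-moment estimate of Lemma~\ref{lem31} with Markov's inequality. Since $\nabla\tilde f(x)-\mathbb{E}_u[\nabla\tilde f(x)]$ is centered under $\mathbb{E}_u$, the left-hand side of \eqref{lem3eq} is precisely its variance, and Lemma~\ref{lem31} already furnishes an explicit upper bound, which I denote by $B(x)$. Applying Markov's inequality to the nonnegative random variable $\|\nabla\tilde f(x)-\mathbb{E}_u[\nabla\tilde f(x)]\|^2$ gives
\begin{equation*}
\mathbb{P}\left(\left\|\nabla\tilde f(x)-\mathbb{E}_u[\nabla\tilde f(x)]\right\|>s\right)
=\mathbb{P}\left(\left\|\nabla\tilde f(x)-\mathbb{E}_u[\nabla\tilde f(x)]\right\|^2>s^2\right)
\le \frac{B(x)}{s^2},
\end{equation*}
and passing to the complementary event yields the lower bound $1-B(x)/s^2$ on $\mathbb{P}(\|\nabla\tilde f(x)-\mathbb{E}_u[\nabla\tilde f(x)]\|\le s)$. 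The whole task then reduces to showing that $B(x)$ is bounded above by the squared expression appearing in the numerator of \eqref{lem2.1.1eq}.

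To this end I would set $a=(\frac{n}{b}-1)^{1/2}\|\nabla f(x)\|$ and $c=\frac{\sqrt{m}n\kappa_{\max}\gamma}{2}\|r(x)\|$, so that the target numerator is $(a+c)^2=a^2+2ac+c^2$ with
\begin{equation*}
a^2=\left(\tfrac{n}{b}-1\right)\|\nabla f(x)\|^2,\qquad
2ac=\left(\tfrac{n}{b}-1\right)^{1/2}\sqrt{m}n\kappa_{\max}\gamma\,\|\nabla f(x)\|\,\|r(x)\|,\qquad
c^2=\tfrac{mn^2\kappa_{\max}^2\gamma^2}{4}\|r(x)\|^2.
\end{equation*}
Comparing these term by term with the three summands of $B(x)$ in \eqref{lem3eq}, the leading terms coincide, so only the cross term and the $\gamma^2$-term require an inequality.

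The key step, and the only place the hypothesis $b\ge n/2$ is used, is the comparison of cross terms. Because $1\le b\le n$ forces $\frac{n}{b}-1\ge 0$ while $b\ge n/2$ forces $\frac{n}{b}-1\le 1$, the elementary bound $t\le\sqrt t$ on $[0,1]$ gives $\frac{n}{b}-1\le(\frac{n}{b}-1)^{1/2}$; multiplying through by $\sqrt{m}n\kappa_{\max}\gamma\|\nabla f(x)\|\|r(x)\|$ shows the cross term of $B(x)$ is at most $2ac$. Similarly $b\ge 1$ gives $\frac{1}{b}\le 1$, so the $\gamma^2$-term $\frac{mn^2\kappa_{\max}^2\gamma^2}{4b}\|r(x)\|^2$ is at most $c^2$. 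Summing the three comparisons gives $B(x)\le a^2+2ac+c^2=(a+c)^2$, which is exactly the numerator of \eqref{lem2.1.1eq}. I anticipate no genuine difficulty; the only subtlety worth flagging is recognizing that $b\ge n/2$ is precisely the threshold placing $\frac{n}{b}-1$ in $[0,1]$, which is what lets $t\le\sqrt t$ absorb the cross term into the perfect square.
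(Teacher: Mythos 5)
Your proposal is correct and follows essentially the same route as the paper: the paper likewise bounds the second moment from Lemma \ref{lem31} by the perfect square $\bigl(\bigl(\tfrac{n}{b}-1\bigr)^{1/2}\|\nabla f(x)\|+\tfrac{\gamma}{2}p(x)\bigr)^2$ using exactly the observations $0\le \tfrac{n}{b}-1\le 1$ (hence $\tfrac{n}{b}-1\le(\tfrac{n}{b}-1)^{1/2}$) and $\tfrac{1}{b}\le 1$, and then applies the multivariate Chebyshev inequality, which is the same as your Markov argument on the squared norm. No gaps.
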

		
		\begin{proof}
			Since $1\leq b \leq n$ and $b \ge \frac{n}{2}$, we have $0\leq \left(\frac{n}{b}- 1\right) \leq			1$.
			Define $p(x):=\sqrt{m}n\kappa_{\max}\|r(x)\|$. By Lemma \ref{lem31},
			\begin{align}\label{2.1.2}
				&\mathbb{E}_u\left[ \left\|\nabla \tilde{f}(x)-\mathbb{E}_u
				[\nabla
				\tilde{f}(x)] 	\right\|^2\right] \nonumber\\
				&\leq \left(\frac{n}{b}- 1\right)
				\|\nabla f(x)\|^2 + \left(\frac{n}{b}- 1\right)\gamma p(x)	
				\|\nabla
				f(x)\| + \frac{\gamma^2}{4b}p(x)^2\nonumber\\
				&\leq \left(\frac{n}{b}- 1\right)\|\nabla f(x)\|^2 +
				\left(\frac{n}{b}- 1\right)^{1/2}
			\gamma p(x)	\|\nabla f(x)\|+ \frac{\gamma^2}{4}p(x)^2
				\nonumber\\
				&= \left(\left(\frac{n}{b}- 1\right)^{1/2}\|\nabla f(x)\|+
				\frac{\gamma}{2} p(x) \right)^2.
			\end{align}
			Then by  the  multivariate Chebyshev's inequality \cite{Chebyshevproof},
			for any $s>0$,
			\begin{align}\label{mchby}
				\mathbb P( \|\nabla \tilde{f}(x)-\mathbb{E}_u [\nabla
				\tilde{f}(x)] \| \leq s) &\geq
				1-\frac{	\mathbb{E}_u\left[ \left\|\nabla
					\tilde{f}(x)-\mathbb{E}_u
					[\nabla \tilde{f}(x)] 	\right\|^2\right]}{s^2}\nonumber \\
&\ge 1- \frac{\left(\left(\frac{n}{b}- 1\right)^{1/2}\|\nabla
					f(x)\|+
					\frac{\gamma}{2} p(x)\right)^2 }{s^2}.
			\end{align}
					This gives (\ref{lem2.1.1eq}).
		\end{proof}

		\section{ Derivative-free LM algorithm and probabilistic gradient
			models }\label{sec3}
		
In this section, we first propose a derivative-free LM algorithm for nonlinear
least squares problems, where the approximate Jacobian matrices are
constructed via the orthogonal spherical smoothing as in   Section
\ref{sec2}.  Then we introduce a new definition of probabilistic gradient
			models and show that the random gradient models generated by Algorithm \ref{alg1} are probabilistically first-order accurate if the smoothing parameter and sampling size  are taken appropriately.

As described in the previous section, the Jacobian models and the gradient
models are stochastic because the direction vectors in smoothing are random.
  	Denote $M_k$ as
the  random model   encompassing all
	randomness of the $k$-th ($k\ge 1$) iteration.
Denote by $J_{M_k}$ and $\nabla	f_{M_k}$ the random Jacobian models and random gradient models, respectively.
We use the notation $m_k=M_k(w_k), J_{m_k}=J_{M_k}(w_k)$
and $\nabla f_{m_k}=\nabla	f_{M_k}(w_k)$ for their
realizations. Consequently,   the
 randomness of the models implies the randomness of
$x_k=X_k(w_k)$, $d_k=D_k(w_k)$, $\lambda_k=\Lambda_k(w_k)$ and $\theta_k=\Theta_k(w_k)$, generated by the corresponding optimization algorithm.

	At the $k$-th iteration, the LM method solves the linear equations
		\begin{equation}\label{solve}
					( J_{m_k}^TJ_{m_k}+\lambda_k I)d
					= -J_{m_k}^T r(x_k)\quad \mbox{with}\quad
					\lambda_k=\theta_k \|J_{m_k}^T r(x_k)\|
				\end{equation}
to obtain the step $d_k$,
where $J_{m_k}$	is the approximate Jacobian matrix at $x_k$, and $\lambda_k$ is the LM parameter. 	
Define the ratio of the actual reduction to the predicted reduction of the objective function as
\begin{equation}\label{ratio}
			\rho_k=\frac{Ared_k}{Pred_k}=
				\frac{\|r(x_k)\|^2-\|r(x_k+d_k)\|^2}{\|r(x_k)\|^2-\|r(x_k)+
					J_{m_k}
					d_k\|^2 }.
		\end{equation}
It plays a key role in deciding whether the step $d_k$ is successful and how to update the parameter $\theta_k$. If $\rho_k$ is smaller than a small positive constant, we reject $d_k$ and increase $\theta_k$. Otherwise we accept $d_k$ and update $\theta_k$ based on its relationship with the gradient approximation norm.

		\begin{algorithm}
			\renewcommand{\algorithmicrequire}{\textbf{Input:}}
			\renewcommand{\algorithmicensure}{\textbf{Output:}}
			\caption{ Derivative-free LM algorithm via orthogonal
			spherical 				smoothing}
			\label{alg1}
			\begin{algorithmic}[1] 
				\REQUIRE Initial $x_0$, $b>0$, $a_1>1>a_2>0
				 (a_1a_2< 1)$,
				$0<p_0<1$, $p_0<p_1<p_2$, $\theta_1\ge \theta_{\min}>0$,
							$\epsilon_0\geq 0$, $k:=0$.
				\STATE  Construct approximate Jacobian $J_{m_k}$ by (\ref{JA}) and (\ref{nabr}).
				\STATE  If $\|J_{m_k}^T r(x_k)\|\leq \epsilon_0$, stop; else
				solve \eqref{solve}	to obtain  $d_k$.
				\STATE Compute 	$\rho_k$ by \eqref{ratio}.
				\STATE Set
				\begin{align}
x_{k+1}=
				\begin{cases}
					x_k+d_k,& \mbox{if}\ \rho_k\ge p_0,\\
					x_k, & \mbox{otherwise}.
				\end{cases}
\end{align}
				If $\rho_k< p_0$, compute $\theta_{k+1}=a_1\theta_k$; otherwise compute
				\begin{equation}\label{2.7a}
					\theta_{k+1}=
					\begin{cases}
						a_1\theta_k,  & \mbox{if}\  \|J_{m_k}^T r(x_k)\|<
						\frac{p_1}{\theta_k},\\
						\theta_k,   & \mbox{if}\
						\|J_{m_k}^T r(x_k)\|\in
										[\frac{p_1}{\theta_k},\frac{p_2}{\theta_k})		
						,\\
						\max\{a_2\theta_k,\theta_{\min}\}, & \mbox{otherwise}.
					\end{cases}
				\end{equation}
				\STATE Set $k:=k+1$;  Go to
				step 1.
				
				\ENSURE $x$
			\end{algorithmic}
		\end{algorithm}

		\begin{definition}\label{def2.1}
			Given constants $\alpha\in (0,1)$,
			$\xi_1 \ge 0$ and $\xi_2 \ge 0$. The sequence of
			random gradient models $\{\nabla f_{M_k}(X_k)\}$ is said to be
			$\alpha$-probabilistically $(\xi_1,\xi_2)$-first-order
			accurate, for corresponding
			sequences $\{X_k\}$, if
			the events
			\begin{equation}\label{AK}
				A_k=\left\lbrace \|\nabla f_{M_k}(X_k)-\nabla f(X_k)\|\leq
				\xi_1
				\| \nabla f(X_k)\|+\xi_2 \| D_{k-1}\|  \right\rbrace
			\end{equation}
			satisfy the  condition
			\begin{equation}\label{relerr}
					\mathbb{P}\left(A_k\mid
					\mathcal{F}_{k-1}\right) \ge
				\alpha,
			\end{equation}
			where $\mathcal{F}_{k-1}$ is the $\sigma$-algebra generated by all
			randomness before the $k$-th ($k\ge 1$) iteration.
	
		Correspondingly,
			$\nabla f_{m_k}(x_k)$ is said to
			be $(\xi_1,\xi_2)$-first-order accurate if
			\begin{equation}\label{2.2realiz}
				\|\nabla f_{m_k}(x_k)-\nabla f(x_k)\|\leq \xi_1\|\nabla
				f(x_k)\| +\xi_2 \|d_{k-1}\|.
			\end{equation}
		\end{definition}

		The following theorem indicates that  the random gradient models
		$\{\nabla
		f_{M_k}(X_k)\}$ constructed by Algorithm \ref{alg1} are  	
		probabilistically first-order accurate if the smoothing parameter  and sampling
		size are taken appropriately.

		\begin{theorem}\label{th2.1.2}
			Suppose that Assumptions \ref{ass:2} holds.
For  $\alpha \in (0,1)$, if $b \ge
			\frac{n}{2}$ and 	$\gamma_k=\|d_{k-1}\|$,
			then the gradient models $\{\nabla f_{M_k}(X_k)\}$  generated by 	
			Algorithm \ref{alg1}  are
			$\alpha$-probabilistically $(\xi_1,\xi_2)$-first-order accurate with
			$\xi_1=(\frac{\frac{n}{b}-1}{1-\alpha})^{1/2}$ and
			$\xi_2=\sqrt{m}\kappa_{\max}\left( \frac{n}{2(1-\alpha)^{1/2}}
			+1	\right)\|r_0\|$.
		\end{theorem}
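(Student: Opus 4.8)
The plan is to condition on the $\sigma$-algebra $\mathcal{F}_{k-1}$. Once we condition, the iterate $X_k$ and the previous step $D_{k-1}$ are frozen at their realizations $x_k$ and $d_{k-1}$, so the only randomness remaining at the $k$-th iteration is the fresh orthogonal sample $[u_1,\ldots,u_b]$ drawn to build $J_{m_k}$, and the smoothing parameter equals the constant $\gamma_k=\|d_{k-1}\|$. Thus $\mathbb{P}(A_k\mid\mathcal{F}_{k-1})$ is exactly the probability, over this sample, that the realized model $\nabla f_{m_k}(x_k)=\nabla\tilde f(x_k)$ satisfies the bound in \eqref{AK}, and Lemmas \ref{lem2.6}, \ref{lem31} and \ref{lem2.1.1} apply verbatim with $x=x_k$ and $\gamma=\gamma_k$.

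First I would split the error via \eqref{2.1.1} into the bias term $\|\nabla f(x_k)-\mathbb{E}_u[\nabla\tilde f(x_k)]\|$ and the deviation term $\|\nabla\tilde f(x_k)-\mathbb{E}_u[\nabla\tilde f(x_k)]\|$. Lemma \ref{lem2.6} bounds the bias deterministically by $\frac{\sqrt m\,n\kappa_{\max}\gamma_k}{n+1}\|r(x_k)\|$, contributing no randomness. For the deviation term I would invoke Lemma \ref{lem2.1.1} with the threshold
\[
s=\frac{\left(\tfrac{n}{b}-1\right)^{1/2}\|\nabla f(x_k)\|+\tfrac{\sqrt m\,n\kappa_{\max}\gamma_k}{2}\|r(x_k)\|}{(1-\alpha)^{1/2}},
\]
which makes the lower bound in \eqref{lem2.1.1eq} equal to exactly $1-(1-\alpha)=\alpha$. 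Hence, with conditional probability at least $\alpha$, the deviation term is at most $s$.

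On that event, adding back the bias bound gives
\[
\|\nabla f_{m_k}(x_k)-\nabla f(x_k)\|\le\left(\frac{\tfrac{n}{b}-1}{1-\alpha}\right)^{1/2}\|\nabla f(x_k)\|+\sqrt m\,n\kappa_{\max}\gamma_k\|r(x_k)\|\left(\frac{1}{2(1-\alpha)^{1/2}}+\frac{1}{n+1}\right).
\]
The leading coefficient is already $\xi_1$. To convert the second term into $\xi_2\|d_{k-1}\|$, I would substitute $\gamma_k=\|d_{k-1}\|$, use $\frac{n}{n+1}\le 1$, and replace $\|r(x_k)\|$ by $\|r_0\|$; collecting constants yields precisely $\xi_2=\sqrt m\,\kappa_{\max}\bigl(\frac{n}{2(1-\alpha)^{1/2}}+1\bigr)\|r_0\|$. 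The resulting event is therefore contained in $A_k$, proving $\mathbb{P}(A_k\mid\mathcal{F}_{k-1})\ge\alpha$.

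The hard part will be justifying the replacement $\|r(x_k)\|\le\|r_0\|$, which is the only nontrivial ingredient. This relies on the monotone decrease of the residual norm along the accepted iterates: a step is taken only when $\rho_k\ge p_0>0$, and since the LM step $d_k$ solving \eqref{solve} gives $Pred_k=\|J_{m_k}d_k\|^2+2\lambda_k\|d_k\|^2\ge 0$, acceptance forces $\|r(x_{k+1})\|\le\|r(x_k)\|$, while rejected iterations leave $x_k$ unchanged. Hence $\|r(x_k)\|$ is nonincreasing on every sample path and bounded pathwise by $\|r(x_0)\|=\|r_0\|$ after conditioning. Everything else is the elementary substitution above together with the inequality $\frac{n}{n+1}\le 1$.
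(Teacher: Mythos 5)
Your proposal is correct and follows essentially the same route as the paper: split the error via \eqref{2.1.1}, bound the bias deterministically by Lemma \ref{lem2.6}, apply the Chebyshev bound of Lemma \ref{lem2.1.1} with the threshold $s=\bar s_k(1-\alpha)^{-1/2}$ so the conditional probability is exactly $\alpha$, and then substitute $\gamma_k=\|d_{k-1}\|$ and $\|r(x_k)\|\le\|r_0\|$ to read off $\xi_1$ and $\xi_2$. The only addition is your explicit justification of $\|r(x_k)\|\le\|r_0\|$ via $Pred_k=\|J_{m_k}d_k\|^2+2\lambda_k\|d_k\|^2\ge 0$ and the acceptance rule, which the paper assumes without comment; that argument is valid.
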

		\begin{proof}
Denote  $\bar{s}_k^2:=\left(
			\left(\frac{n}{b}- 1\right)^{1/2}	\|\nabla f(x_k)\|+ \frac{\sqrt{m}n\kappa_{\max}\|r_0\|}{2}\gamma_k
			\right)^2 $.
		Since $\|r(x_k)\| \leq \|r_0\|$, by Lemma \ref{lem2.1.1},
			\begin{equation}\label{th2term1}
				\mathbb{P}\left( \left\|\nabla
				f_{M_k} (x_k)-\mathbb{E}_u[\nabla f_{M_k}(x_k)]\right\| \leq
				s_k
				\right)
				\ge 1- \frac{\bar{s}_k^2 }{s_k^2}.
			\end{equation}
			Take $s_k=\bar{s}_k (1-\alpha)^{-1/2}$. Then (\ref{th2term1}) gives
			\begin{equation}\label{th2term2}
				\mathbb{P}\left( \left\|\nabla
				 f_{M_k} (x_k)-\mathbb{E}_u[\nabla f_{M_k} (x_k)]\right\| \leq
				s_k
				\right)
				\ge \alpha.
			\end{equation}
By Lemma \ref{lem2.6},
\begin{align}\label{err1}
			\left\|\nabla
			f(x_k)- \mathbb{E}_u[\nabla f_{M_k}(x_k)] \right\|\leq
			\frac{\sqrt{m}n	\kappa_{\max} \gamma_k}{n+1} \|r(x_k)\| \leq
			\sqrt{m}\kappa_{\max}\gamma_k\|r(x_k)\|.
		\end{align}
This, together with (\ref{2.1.1}) and (\ref{th2term2}), yields
			\begin{align*}
\mathbb{P}\Bigg( \left\|\nabla
				 f_{M_k} (x_k)-\nabla f(x_k)\right\| \leq&
				\left(\frac{\frac{n}{b}-1}{1-\alpha}\right)^{1/2}\|\nabla f(x_k)\|   \\
&  \quad
  +\frac{\sqrt{m}n\kappa_{\max}\|r_0\|\gamma_k}{2(1-\alpha)^{1/2}}  +\sqrt{m}\kappa_{\max}\|r_0\|\gamma_k
				\Bigg)  \ge \alpha.
			\end{align*}
	Note that  $\gamma_k=\|d_{k-1}\|$, we have
			\begin{align*}
			\mathbb{P}\Bigg( \|\nabla
			f_{M_k}(X_k)-&\nabla f(X_k)\| \leq
			\left(\frac{\frac{n}{b}-1}{1-\alpha}\right)^{1/2}\|\nabla
			f(X_k)\|   \\
			&
			+ \sqrt{m}\kappa_{\max}\left( \frac{n}{2(1-\alpha)^{1/2}}
			+1	\right)\|r_0\| \|D_{k-1}\| \mid \mathcal{F}_{k-1}
			\Bigg)  \ge \alpha.
		\end{align*}					
 By 	Definition \ref{def2.1}, $\{\nabla f_{M_k}(X_k)$\}  are
				$\alpha$-probabilistically $(\xi_1,\xi_2)$-first-order
				accurate
	with
				$\xi_1=(\frac{\frac{n}{b}-1}{1-\alpha})^{1/2}$ and
				$\xi_2=\sqrt{m}\kappa_{\max}\left( \frac{n}{2(1-\alpha)^{1/2}}
				+1	\right)\|r_0\|$.
 		\end{proof}

		\begin{lemma}\label{lem3.4}
			Suppose that Assumption \ref{ass:2} holds.
For  constants $ \eta \in (0,1)$ and
			$\alpha \in (0,1)$,		if
			$\gamma_k=\|d_{k-1}\|$, 			$b \ge
			\max\left\{n\Big/\left(
			\frac{\eta^2(1-p_0)^2(1-\alpha)}{\left( 2+\eta
				(1-p_0)\right) ^2} +1\right), \frac{n}{2}\right\}	$,
			then the gradient models
			$\{\nabla f_{M_k}(X_k)\}$  generated
			by 	
			Algorithm \ref{alg1}  are 			
			$\alpha$-probabilistically $(\xi_1,\xi_2)$-first-order accurate
			with
			$\xi_1 \leq  \frac{\eta (1-p_0)}{2+ \eta (1-p_0)}$.

		\end{lemma}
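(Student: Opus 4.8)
The plan is to obtain Lemma \ref{lem3.4} as an immediate corollary of Theorem \ref{th2.1.2}, by choosing the sampling size $b$ large enough to push the factor $\xi_1$ below the prescribed threshold. The whole content is the algebraic inversion of the formula for $\xi_1$; no new probabilistic estimate is required.

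First I would note that the hypothesis on $b$ is a maximum whose second argument is $\frac{n}{2}$, so in particular $b\ge\frac{n}{2}$ holds. Hence Theorem \ref{th2.1.2} applies verbatim: with $\gamma_k=\|d_{k-1}\|$, the gradient models $\{\nabla f_{M_k}(X_k)\}$ are $\alpha$-probabilistically $(\xi_1,\xi_2)$-first-order accurate with
\begin{equation*}
\xi_1=\left(\frac{\frac{n}{b}-1}{1-\alpha}\right)^{1/2},\qquad
\xi_2=\sqrt{m}\kappa_{\max}\left(\frac{n}{2(1-\alpha)^{1/2}}+1\right)\|r_0\|.
\end{equation*}
It therefore remains only to verify the asserted bound on $\xi_1$, while $\xi_2$ is inherited unchanged.

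The key observation is that $\xi_1$ is decreasing in $b$ (a larger $b$ gives a smaller $\frac{n}{b}$), so the lower bound on $b$ translates into an upper bound on $\xi_1$. Concretely, I would abbreviate $c:=\frac{\eta^2(1-p_0)^2(1-\alpha)}{\left(2+\eta(1-p_0)\right)^2}$, so that the first argument of the maximum reads $b\ge n/(c+1)$. Since $1\le b\le n$, this is equivalent to $\frac{n}{b}-1\le c$. Dividing by $1-\alpha>0$ and taking square roots then yields
\begin{equation*}
\xi_1=\left(\frac{\frac{n}{b}-1}{1-\alpha}\right)^{1/2}
\le\left(\frac{c}{1-\alpha}\right)^{1/2}
=\frac{\eta(1-p_0)}{2+\eta(1-p_0)},
\end{equation*}
which is exactly the claimed inequality.

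This argument is essentially bookkeeping, so the only (minor) point to check is that every manipulated quantity is nonnegative, so the inequalities are preserved under squaring and taking square roots: here $\frac{n}{b}-1\ge 0$ because $b\le n$, while $1-p_0>0$ and $1-\alpha>0$ by hypothesis, and $\eta(1-p_0)>0$ makes the final square root equal to the stated positive fraction. I do not anticipate any genuine obstacle beyond keeping the chain of equivalent inequalities straight when inverting the expression for $\xi_1$ to recover the explicit sampling-size requirement.
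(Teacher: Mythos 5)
Your proposal is correct and follows essentially the same route as the paper: invoke Theorem \ref{th2.1.2} (which applies since $b\ge n/2$), then observe that the lower bound $b\ge n\big/\bigl(\tfrac{\eta^2(1-p_0)^2(1-\alpha)}{(2+\eta(1-p_0))^2}+1\bigr)$ is equivalent to $\tfrac{n}{b}-1\le \tfrac{\eta^2(1-p_0)^2(1-\alpha)}{(2+\eta(1-p_0))^2}$, so that dividing by $1-\alpha$ and taking square roots gives the stated bound on $\xi_1$. The paper's own proof is exactly this two-line algebraic inversion, so there is nothing further to add.
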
	
	\begin{proof}
			It follows from Theorem \ref{th2.1.2} that
		$\{\nabla f_{M_k}(X_k)\}$ are	
		$\alpha$-probabilistically $(\xi_1,\xi_2)$-first-order accurate
		with
		$\xi_1=(\frac{\frac{n}{b}-1}{1-\alpha})^{1/2}$ if
		$\gamma_k=\|d_{k-1}\|$ and
		$b \ge  \frac{n}{2}$.
		
		Since $b \ge
		\max\left\{n \Big/\left(  \frac{\eta^2(1-p_0)^2(1-\alpha)}{\left( 2+\eta
			(1-p_0)\right) ^2} +1 \right), \frac{n}{2}\right\}	$ and $1-\alpha
		> 0$, we have
		\begin{align*}
			\xi_1=\left(\frac{\frac{n}{b}-1}{1-\alpha}\right)^{1/2}
			\leq \frac{\left(\frac{\eta^2(1-p_0)^2(1-\alpha)}{\left( 2+\eta
					(1-p_0)\right) ^2}\right)^{1/2}}{(1-\alpha)^{1/2}}=
			\frac{\eta (1-p_0)}{2+ \eta (1-p_0)}.
		\end{align*}
			\end{proof}
		
\begin{remark}\label{remark}
		By Lemma \ref{lem3.4}, if we take $\gamma_k=\|d_{k-1}\|$ and $b \ge
		n\Big/\left(\frac{\eta^2(1-p_0)^2}{2 \left( 2+\eta
		(1-p_0)\right) ^2} +1\right) $,
then	$\{\nabla
	f_{M_k}(X_k)\}$ are	
	$\frac{1}{2}$-probabilistically
	$(\xi_1,\xi_2)$-first-order 	accurate with
	$\xi_1 \leq  \frac{\eta (1-p_0)}{2+ \eta (1-p_0)}$.
\end{remark}

Theorem \ref{th2.1.2} and Lemma \ref{lem3.4}  indicate that the random gradient models generated by Algorithm \ref{alg1} are probabilistically first-order accurate
if the smoothing parameter $\gamma_k$ and sampling size $b$ are chosen appropriately.   		

	\section{ Worst-case complexity}\label{sec5}

In this section, we 
derive the worst-case bound of the iterations number $K_{\epsilon}$, for which
$\|\nabla f(X_{K_{\epsilon}})\| \leq \epsilon$, with high probability.
We first prove that the $k$-th iteration is successful if $\theta_k$ is large.
We make the following assumption.

		\begin{assumption}\label{ass:1}
(i)	$r(x)$ is Lipschitz continuous, i.e., there exists $\kappa_{lr}>0$ such that
		\begin{equation}\label{lipr}
			\|r(x)-r(y)\| \leq \kappa_{lr} \|x-y\|,\quad \forall x,y \in \mathbb{R}^n.
		\end{equation}
 (ii) For 	every		realization  of Algorithm \ref{alg1},
 $J_{m_k}$ is bounded  for all $k$, i.e., there exists
		$\kappa_{bj}>0$ such
		that
		\begin{equation}\label{ASS3}
			\|J_{m_k}\| \leq \kappa_{bj},\quad \forall k.
		\end{equation}

	\end{assumption}

	\begin{lemma}\label{lem3}
	Suppose that Assumptions \ref{ass:2} and \ref{ass:1}
		 hold.
	For $\eta \in (0,1)$,	if  (\ref{2.2realiz}) holds
			with
		$\xi_1 \leq  \frac{\eta (1-p_0)}{2+ \eta (1-p_0)}$, and if
			\begin{equation}\label{lem3theta}
				\theta_k \ge
			 \frac{\frac{2\xi_2 a_1}{1-\xi_1}+
					\kappa_{bj}^2+  2(\kappa_{lr}^2+\kappa_{lj}\|r_0\|)  }{(1-\eta)(1-p_0) \|\nabla
					f_{m_k}(x_k)\|} ,
			\end{equation}
			then
			$\rho_k \ge p_0.$
	\end{lemma}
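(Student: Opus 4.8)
The plan is to establish the equivalent inequality $Pred_k - Ared_k \le (1-p_0)Pred_k$ (note $Pred_k>0$ below), i.e. $1-\rho_k \le 1-p_0$, by pairing an upper bound on the numerator $Pred_k - Ared_k$ that \emph{isolates the gradient-model error} with a lower bound on $Pred_k$ that is \emph{linear} in $\|d_k\|$, and then splitting the tolerance as $1-p_0 = \eta(1-p_0) + (1-\eta)(1-p_0)$. Write $g_k := \nabla f_{m_k}(x_k) = J_{m_k}^T r(x_k)$ and $\lambda_k = \theta_k\|g_k\|$. From the normal equations \eqref{solve} a direct computation using $g_k = -(J_{m_k}^TJ_{m_k}+\lambda_k I)d_k$ gives
\[
Pred_k = \|r(x_k)\|^2 - \|r(x_k)+J_{m_k}d_k\|^2 = \|J_{m_k}d_k\|^2 + 2\lambda_k\|d_k\|^2 .
\]
Since the eigenvalues of $J_{m_k}^TJ_{m_k}+\lambda_k I$ lie in $[\lambda_k,\ \|J_{m_k}\|^2+\lambda_k]$, I get $\|d_k\|\le \|g_k\|/\lambda_k = 1/\theta_k$ and $\|g_k\|\le(\|J_{m_k}\|^2+\lambda_k)\|d_k\|$. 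The key device is the \emph{linear} lower bound $Pred_k \ge 2\lambda_k\|d_k\|^2 \ge \|g_k\|\|d_k\|$, which is valid as soon as $\lambda_k \ge \|J_{m_k}\|^2$; and this is precisely what \eqref{lem3theta} guarantees, because its right-hand numerator exceeds $\kappa_{bj}^2$ while $(1-\eta)(1-p_0)<1$, so $\lambda_k = \theta_k\|g_k\| \ge \kappa_{bj}^2 \ge \|J_{m_k}\|^2$. I will also record $\|d_{k-1}\|\le 1/\theta_{k-1}\le a_1/\theta_k$, using that the update in Step 4 never enlarges $\theta$ by more than the factor $a_1$, hence $\theta_{k-1}\ge \theta_k/a_1$.

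For the numerator I expand, with $\delta_k = r(x_k+d_k)-r(x_k)$ and $\omega_k = r(x_k+d_k)-r(x_k)-J(x_k)d_k$,
\[
Pred_k - Ared_k = 2(\nabla f(x_k)-g_k)^T d_k + 2\, r(x_k)^T\omega_k + \|\delta_k\|^2 - \|J_{m_k}d_k\|^2 .
\]
The crucial algebraic point here is that the Jacobian-model error enters only through $(J(x_k)-J_{m_k})^T r(x_k) = \nabla f(x_k)-g_k$, i.e. the \emph{gradient}-model error, to which the accuracy hypothesis \eqref{2.2realiz} applies. Estimating each piece by $\|\omega_k\|\lesssim \kappa_{lj}\|d_k\|^2$ (Lipschitz $J$), $\|\delta_k\|\le\kappa_{lr}\|d_k\|$ (Lipschitz $r$), $\|J_{m_k}d_k\|\le\kappa_{bj}\|d_k\|$ and $\|r(x_k)\|\le\|r_0\|$, and feeding in the consequence $\|\nabla f(x_k)-g_k\|\le (\xi_1\|g_k\|+\xi_2\|d_{k-1}\|)/(1-\xi_1)$ of \eqref{2.2realiz} (obtained via the triangle inequality from $\|\nabla f(x_k)\|\le\|g_k\|+\|\nabla f(x_k)-g_k\|$), routine estimation yields a bound of the form
\[
Pred_k - Ared_k \le \frac{2\xi_1}{1-\xi_1}\,\|g_k\|\,\|d_k\| + \big(\kappa_{bj}^2+2\kappa_{lr}^2+2\kappa_{lj}\|r_0\|\big)\|d_k\|^2 + \frac{2\xi_2 a_1}{(1-\xi_1)\theta_k}\,\|d_k\| .
\]

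Dividing by $Pred_k \ge \|g_k\|\|d_k\|$ then finishes the argument. The first term becomes $2\xi_1/(1-\xi_1)$, and the calibrated bound $\xi_1\le \eta(1-p_0)/(2+\eta(1-p_0))$ forces it to be $\le \eta(1-p_0)$ (rearranging, this is exactly $\xi_1(2+\eta(1-p_0))\le\eta(1-p_0)$). In the remaining two terms one factor $\|d_k\|$ cancels; applying $\|d_k\|\le 1/\theta_k$ to the quadratic term and noting the $\xi_2$ term already carries $1/\theta_k$, their sum is at most $\big[\tfrac{2\xi_2 a_1}{1-\xi_1}+\kappa_{bj}^2+2\kappa_{lr}^2+2\kappa_{lj}\|r_0\|\big]/(\theta_k\|g_k\|)$, which is $\le (1-\eta)(1-p_0)$ precisely by \eqref{lem3theta}. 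Adding the two budgets gives $1-\rho_k \le (1-p_0)$, i.e. $\rho_k\ge p_0$.

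I expect the main obstacle to be the joint design of the two bounds so that the constants land exactly on the stated threshold: the numerator must be organized so that the gradient-model error (the sole channel through which the Jacobian approximation enters) splits into a part $\propto \|g_k\|\|d_k\|$, matched against the linear lower bound on $Pred_k$ and absorbed by the $\xi_1$ budget, and a part $\propto \|d_{k-1}\|\|d_k\|$, matched against the $\theta_k$ threshold through $\|d_{k-1}\|\le a_1/\theta_k$. Securing the \emph{linear} lower bound $Pred_k\ge \|g_k\|\|d_k\|$ — legitimate only because \eqref{lem3theta} forces $\lambda_k\ge \kappa_{bj}^2\ge\|J_{m_k}\|^2$ — is what makes every constant align with the prescribed lower bound on $\theta_k$; the remaining per-term estimates are careful but routine.
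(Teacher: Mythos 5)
Your proof is correct and follows essentially the same route as the paper: an upper bound on $Pred_k-Ared_k$ in which the Jacobian-model error enters only through $\nabla f(x_k)-\nabla f_{m_k}(x_k)$, the linear lower bound $Pred_k\ge\|\nabla f_{m_k}(x_k)\|\,\|d_k\|$ (which the paper gets from Powell's estimate together with $\|d_k\|\le\|J_{m_k}^Tr(x_k)\|/\|J_{m_k}^TJ_{m_k}\|$, while you derive it directly from the normal equations via $Pred_k=\|J_{m_k}d_k\|^2+2\lambda_k\|d_k\|^2$ and $\lambda_k\ge\kappa_{bj}^2$ --- a harmless, slightly more self-contained variant), the bounds $\|d_k\|\le1/\theta_k$ and $\|d_{k-1}\|\le a_1/\theta_k$, and the identical budget split $1-p_0=\eta(1-p_0)+(1-\eta)(1-p_0)$. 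All constants land on the stated threshold exactly as in the paper's argument.
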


		\begin{proof}
		
		By Assumptions  \ref{ass:2} and \ref{ass:1}, for $t\in(0,1)$,
			\begin{align}\label{two}
				& \|J(x_k+ t d_k)^T r(x_k+td_k)-J(x_k)^T r(x_k) \|
				\nonumber\\
				&\leq  \|J(x_k+ t d_k)^T r(x_k+td_k) - J(x_k+ t d_k)^T
				r(x_k)  \|\nonumber\\
&\quad +  \|J(x_k+ t d_k)^T r(x_k)- J(x_k)^T r(x_k) \|
				\nonumber\\
				& \leq  \|J(x_k+ t d_k) \|\, \|r(x_k+td_k)-r(x_k) \| +  \|
				J(x_k+ t d_k)-J(x_k)  \|\, \|  r(x_k)  \| \nonumber\\
				& \leq \left(\kappa_{lr}^2+\kappa_{lj} \|r_0 \| \right) \|d_k \|.
			\end{align}
Hence,
				\begin{align*}
					 \left|\int_0^1 \left[ J(x_k+ t d_k)^T r(x_k+td_k)-J(x_k)^T
					r(x_k)					\right]^Td_k dt\right|
					  \leq  (\kappa_{lr}^2+\kappa_{lj}\|r_0\|) \|d_k\|^2.
				\end{align*}
						Thus, by Taylor's theorem,
		 \begin{align}\label{AP}
					&|Ared_k-Pred_k|=\left| \|r(x_k+d_k)\|^2-
					\|r(x_k)+ J_{m_k}
					d_k\|^2
					\right| \nonumber \\
					&= \left|    \|r(x_k)\|^2 + 2 \left(J_k^Tr(x_k) \right)^T d_k
					+ 2 \int_0^1\left[ J(x_k+ t d_k)^T
					r(x_k+td_k)-J_k^T r(x_k) \right]^Td_k dt \right.
					\nonumber\\
					& \left. \quad - \left( \|r(x_k)\|^2 +2
					(J_{m_k}^Tr(x_k))^Td_k +
					d_k^T
					J_{m_k}^T J_{m_k} d_k   \right) \right| \nonumber\\
					&\leq  2 \left\| \left( (J_{m_k}- J_k)^Tr(x_k) \right)^T
					d_k
					\right\| +
					\left|
					d_k^T J_{m_k}^T J_{m_k} d_k  \right| \nonumber\\
& \quad+ 2\left|
					\int_0^1
					\left[ J(x_k+ t d_k)^T r(x_k+td_k)-J_k^T r(x_k)
					\right]^Td_k dt
					\right| \nonumber\\
					&\leq 2 \|\nabla f_{m_k}(x_k)- \nabla f(x_k)\|\,\|d_k\|+
					\|
					J_{m_k}^T
					J_{m_k} \|\,\|d_k\|^2 +
					2(\kappa_{lr}^2+\kappa_{lj}\|r_0\|) \|d_k\|^2 .
				\end{align}

			By (\ref{lem3theta}),
			\begin{align}\label{thetabig}
				\theta_k \ge
				\frac{\frac{2\xi_2 a_1}{1-\xi_1}+
					\kappa_{bj}^2+  2(\kappa_{lr}^2+\kappa_{lj}\|r_0\|)
					}{(1-\eta)(1-p_0) \|\nabla
					f_{m_k}(x_k)\|}  \ge \frac{\kappa_{bj}^2}{\|\nabla	
					f_{m_k}(x_k)\|}.
			\end{align}
This, together  with the definition of $d_k$  and
			(\ref{ASS3}), yields
			\begin{align}\label{3.16b}
				\|d_k\|\leq \frac{\|J_{m_k}^T
					r(x_k)\|}{\theta_k
				\|\nabla
				f_{m_k}(x_k)\|}\leq\frac{\|J_{m_k}^T
					r(x_k)\|}{\kappa_{bj}^2} \leq \frac{\|J_{m_k}^T
					r(x_k)\|}{\|J_{m_k}^T J_{m_k}\|}.
			\end{align}
Note that  by Powell's	result given  in \cite{POWELL}, we have
			\begin{align}\label{pow}
				Pred_k \ge \|J_{m_k}^T r(x_k)\| \min \left\{ \|d_k\|,
				\frac{\|J_{m_k}^T r(x_k)\|}{\|J_{m_k}^T J_{m_k}\|}
				\right\}, 
				\quad
				\forall k.
			\end{align}
Hence,
			\begin{equation}\label{3.13}
				Pred_k \ge \|J_{m_k}^T r(x_k)\|\,\|d_k\|=\|\nabla
				f_{m_k}(x_k)\|\,\|d_k\|.
			\end{equation}
			Combining (\ref{2.2realiz}), (\ref{AP}) and \eqref{3.13}, we obtain
			\begin{align}\label{rhop0}
			 &	|\rho_k-1|= \frac{|Ared_k-Pred_k|}{Pred_k}\nonumber\\
				&\leq \frac{1}{\|\nabla f_{m_k}(x_k)\|} \left[ 2
				\|\nabla f_{m_k}(x_k)- \nabla f(x_k)\| + \left( \|
				J_{m_k}^T
				J_{m_k} \| +  2(\kappa_{lr}^2+\kappa_{lj}\|r_0\|)
				\right)
				\|d_k\| \right] \nonumber\\
				& \leq  \frac{2\xi_1 \|\nabla
					f(x_k)\| }{\|\nabla f_{m_k}(x_k)\|} + \frac{2\xi_2
					\|d_{k-1}\|}{\|\nabla f_{m_k}(x_k)\|} +
				\frac{\left(\|J_{m_k}^T J_{m_k}\| +
				2(\kappa_{lr}^2+\kappa_{lj}\|r_0\|)
					\right)\|d_k\|}{\|\nabla f_{m_k}(x_k)\|}.
			\end{align}
			By	(\ref{2.2realiz}),
			\begin{equation}\label{3.16}
				(1-\xi_1)\|\nabla f(x_k)\| \leq \|\nabla f_{m_k}(x_k)\|
				+\xi_2
				\|d_{k-1}\|.
			\end{equation}
		Since  $\xi_1 \leq  \frac{\eta (1-p_0)}{2+ \eta (1-p_0)}<1$, we have
			\begin{align}\label{ft}
				\frac{\|\nabla f(x_k)\|}{\|\nabla f_{m_k}(x_k)\|}& \leq
				\frac{1}{1-\xi_1}+
				\frac{\xi_2 \|d_{k-1}\|}{(1-\xi_1)\|\nabla f_{m_k}(x_k)\|},\\
\frac{2\xi_1 }{1-\xi_1} &\leq \eta (1-p_0).
			\end{align}
			By the definition of $d_k$, we have
\begin{align}\label{3.19}
\|d_k\| \leq
			\frac{\|J_{m_k}^T
				r(x_k)\|}{\theta_k \|J_{m_k}^Tr(x_k)\|}=\frac{1}{\theta_{k}}.
\end{align}
Moreover, by (\ref{2.7a}), it always holds
\begin{align} \label{3.17}
\frac{1}{\theta_{k-1}}\leq \frac{a_1}{\theta_k}, \quad \forall k.
\end{align}
It then follows from \eqref{lem3theta}, \eqref{rhop0}--\eqref{3.17} that
			\begin{align}\label{3.14}
				|\rho_k-1| &\leq  \frac{2\xi_1 }{1-\xi_1} + \frac{2
					\xi_1\xi_2	\|d_{k-1}\|
				}{(1-\xi_1)\|\nabla f_{m_k}(x_k)\|} + \frac{2\xi_2
					\|d_{k-1}\|}{\|\nabla f_{m_k}(x_k)\|} \nonumber\\
&\quad+
				\frac{\left(\|J_{m_k}^T J_{m_k}\| +
				2(\kappa_{lr}^2+\kappa_{lj}\|r_0\|)
					\right)\|d_k\|}{\|\nabla f_{m_k}(x_k)\|}  \nonumber\\
				& = \frac{2\xi_1 }{1-\xi_1} + \frac{2\xi_2
					\|d_{k-1}\|}{(1-\xi_1)\|\nabla f_{m_k}(x_k)\|} +
				\frac{\left(\|J_{m_k}^T J_{m_k} \| +
				2(\kappa_{lr}^2+\kappa_{lj}\|r_0\|)
					\right)\|d_k\|}{\|\nabla f_{m_k}(x_k)\|}  \nonumber\\
				&\leq \frac{2\xi_1 }{1-\xi_1} + \frac{1}{\theta_{k} \|\nabla
					f_{m_k}(x_k)\| }\left(\frac{2\xi_2 a_1}{1-\xi_1}+
				\kappa_{bj}^2+  2(\kappa_{lr}^2+\kappa_{lj}\|r_0\|)  \right)\nonumber\\
 &\leq \eta (1-p_0) +(1-\eta)(1-p_0) \nonumber\\
 &=1-p_0.
			\end{align}
	This implies that $\rho_k \ge p_0$.			
		\end{proof}

		Define the set
\begin{align}\label{H1}
\mathcal{H}_1=\left\lbrace k\in \N: \rho_k \ge p_0,  \|\nabla
		f_{m_k}(x_k)\| \ge  \frac{p_1}{\theta_{k}}  \right\rbrace.
\end{align}
Denote by  $x^*$ the minimizer of \eqref{prob}.

   \begin{lemma}\label{lem5.2}
   	Suppose that Assumptions  \ref{ass:2} and \ref{ass:1} hold. For any realization
   	of Algorithm \ref{alg1},
   	\begin{align}\label{lem5.2eq}
   		\sum_{k=0}^{\infty} \frac{1}{\theta_k^2}\leq
   		\frac{a_2^{-2}}{1-a_1^{-2}} \left( 	\frac{p_0}{2}  \min\left\lbrace
   		\frac{p_1^2}{\kappa_{bj}^2},p_1\right\rbrace
   		\right)^{-1} \left( \|r_0\|^2 -\|r(x^*)\|^2\right):=\beta.
   	\end{align}
   \end{lemma}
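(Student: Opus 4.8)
The plan is to split the series into the ``productive'' iterations $\mathcal{H}_1$ of \eqref{H1}, on which the objective genuinely decreases, and the rest, on which $\theta_k$ merely grows: a telescoping estimate controls the sum over $\mathcal{H}_1$, and a geometric-series estimate transfers it to the whole series. Throughout write $g_k:=\|J_{m_k}^T r(x_k)\|=\|\nabla f_{m_k}(x_k)\|$. I would first record how $\theta_k$ evolves under Step~4 of Algorithm~\ref{alg1}. If $k\notin\mathcal{H}_1$, then either $\rho_k<p_0$, or $\rho_k\ge p_0$ with $g_k<p_1/\theta_k$, and in both cases \eqref{2.7a} gives $\theta_{k+1}=a_1\theta_k$; thus $\theta$ is inflated by the fixed factor $a_1>1$ off $\mathcal{H}_1$. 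If $k\in\mathcal{H}_1$, then $\rho_k\ge p_0$ and $g_k\ge p_1/\theta_k$, so $\theta_{k+1}$ is either $\theta_k$ or $\max\{a_2\theta_k,\theta_{\min}\}$, whence $a_2\theta_k\le\theta_{k+1}\le\theta_k$. Finally, since $x_{k+1}=x_k$ whenever $\rho_k<p_0$ and $\|r(x_{k+1})\|\le\|r(x_k)\|$ whenever $\rho_k\ge p_0$, the sequence $\{\|r(x_k)\|^2\}$ is non-increasing and bounded below by $\|r(x^*)\|^2$.

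Second, I would prove the key per-iteration bound that for every $k\in\mathcal{H}_1$,
\[
Pred_k\ \ge\ \frac{1}{2\theta_k^2}\,\min\Bigl\{\frac{p_1^2}{\kappa_{bj}^2},\,p_1\Bigr\}.
\]
Starting from Powell's inequality \eqref{pow}, $Pred_k\ge g_k\min\{\|d_k\|,\,g_k/\|J_{m_k}^T J_{m_k}\|\}$, I use that the LM step solving \eqref{solve} satisfies $\|d_k\|\ge g_k/(\|J_{m_k}^T J_{m_k}\|+\lambda_k)$ with $\lambda_k=\theta_k g_k$; since also $g_k/\|J_{m_k}^T J_{m_k}\|\ge g_k/(\|J_{m_k}^T J_{m_k}\|+\lambda_k)$, the minimum is bounded below by the same quantity and $Pred_k\ge g_k^2/(\|J_{m_k}^T J_{m_k}\|+\theta_k g_k)$. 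Splitting into the regime $\theta_k g_k\ge\|J_{m_k}^T J_{m_k}\|$ (giving $Pred_k\ge g_k/(2\theta_k)\ge p_1/(2\theta_k^2)$) and the regime $\theta_k g_k<\|J_{m_k}^T J_{m_k}\|\le\kappa_{bj}^2$ (giving $Pred_k\ge g_k^2/(2\kappa_{bj}^2)\ge p_1^2/(2\kappa_{bj}^2\theta_k^2)$), and using $g_k\ge p_1/\theta_k$ from \eqref{H1} together with the bound \eqref{ASS3}, yields the displayed estimate. On $\mathcal{H}_1$ the step is accepted, so $\|r(x_k)\|^2-\|r(x_{k+1})\|^2=Ared_k\ge p_0\,Pred_k$; summing over $k\in\mathcal{H}_1$ and telescoping the non-increasing quantity $\|r(x_k)\|^2$ gives
\[
\frac{p_0}{2}\min\Bigl\{\frac{p_1^2}{\kappa_{bj}^2},p_1\Bigr\}\sum_{k\in\mathcal{H}_1}\frac{1}{\theta_k^2}\ \le\ \|r_0\|^2-\|r(x^*)\|^2.
\]

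Third, I would transfer this to the full series via the geometric growth of $\theta$ off $\mathcal{H}_1$. Each maximal block of consecutive indices not in $\mathcal{H}_1$ begins right after some $k_i\in\mathcal{H}_1$, where $\theta_{k_i+1}\ge a_2\theta_{k_i}$, and thereafter $\theta$ is multiplied by $a_1$ at each step, so the values along the block are at least $a_2\theta_{k_i},\,a_2a_1\theta_{k_i},\,a_2a_1^2\theta_{k_i},\dots$; summing the corresponding terms $1/\theta_k^2$ gives at most $\frac{a_2^{-2}}{1-a_1^{-2}}\,\theta_{k_i}^{-2}$. Charging each such block, together with its anchoring index $k_i$, to $\theta_{k_i}$ yields $\sum_{k}1/\theta_k^2\le\frac{a_2^{-2}}{1-a_1^{-2}}\sum_{k\in\mathcal{H}_1}1/\theta_k^2$, and combining with the telescoping bound of the second step produces exactly $\beta$.

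The main obstacle will be the bookkeeping of this third step: one must verify that every block off $\mathcal{H}_1$ is anchored to a single preceding element of $\mathcal{H}_1$ so that the contraction factor $a_2$ is charged once per block, treat the initial block preceding the first element of $\mathcal{H}_1$ as well as the degenerate cases in which $\mathcal{H}_1$ is empty or finite, and check that the per-block geometric sums reassemble into the single constant $\frac{a_2^{-2}}{1-a_1^{-2}}$. By contrast, the per-iteration estimate of the second step is routine once the two regimes for $\lambda_k$ versus $\|J_{m_k}^T J_{m_k}\|$ are separated, and the telescoping relies only on the monotonicity established in the first step.
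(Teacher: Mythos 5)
Your proposal follows essentially the same route as the paper: the same Powell-based per-iteration lower bound $Pred_k \ge \frac{1}{2\theta_k^2}\min\left\{\frac{p_1^2}{\kappa_{bj}^2},p_1\right\}$ on $\mathcal{H}_1$ obtained by the same two-regime split of $\theta_k\|\nabla f_{m_k}(x_k)\|$ against $\|J_{m_k}^TJ_{m_k}\|\le\kappa_{bj}^2$, the same telescoping of $\|r(x_k)\|^2$ over $\mathcal{H}_1$, and the same block-by-block geometric transfer to the full series using $\theta_{k+1}=a_1\theta_k$ off $\mathcal{H}_1$ and $\theta_{k+1}\ge a_2\theta_k$ on it. The bookkeeping concern you flag about the initial block preceding the first index of $\mathcal{H}_1$ is genuine, but it is equally present in the paper's own argument, which finesses it with the convention $k_0=-1$, $1/\theta_{-1}=0$.
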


\begin{proof}

By the definition of $d_k$ and Assumption \ref{ass:1},
	\begin{equation*}
	\|d_k\|\geq
	\frac{\|\nabla f_{m_k}(x_k)\|}{\kappa_{bj}^2+ \theta_k\|\nabla
	f_{m_k}(x_k)\|},
\end{equation*}
hence
\begin{align}\label{4.25}
\|d_k\|\geq
\begin{cases}
\dfrac{\|\nabla f_{m_k}(x_k)\|}{2\kappa_{bj}^2},	& \text{ if } \kappa_{bj}^2
\ge \theta_k\|\nabla f_{m_k}(x_k)\|, \\
\dfrac{1}{2\theta_k},	& \text{
otherwise. }
\end{cases}
\end{align}
It follows from
(\ref{ratio}), (\ref{pow}), (\ref{ASS3}) and (\ref{4.25})
that, for any
$k\in
\mathcal{H}_1$,
	\begin{align}\label{ut1}
&		\|r_k\|^2-\|r(x_k+d_k)\|^2 \nonumber\\
&\ge p_0 (\|r_k\|^2-\|r_k+J_{m_k}d_k\|^2) \nonumber\\
		& \ge p_0 \|\nabla f_{m_k}(x_k)\| \min \left\{
		\|d_k\|,
		\frac{\|\nabla f_{m_k}(x_k)\|}{\|J_{m_k}^T
		J_{m_k}\|}
		\right\} \nonumber\\
		&\ge p_0 \|\nabla f_{m_k}(x_k)\| \min \left\lbrace
		\min \left\{
		\frac{\|\nabla f_{m_k}(x_k)\|}{2\kappa_{bj}^2},
		\frac{1}{2\theta_k}
		\right\},
		\frac{\|\nabla f_{m_k}(x_k)\|}{\|J_{m_k}^T
			J_{m_k}\|}
		 \right\rbrace \nonumber\\
		 &\ge  p_0 \frac{p_1}{\theta_{k}}
		 \min \left\lbrace
		 \min \left\{
		 \frac{p_1}{2\kappa_{bj}^2\theta_{k}},
		 \frac{1}{2\theta_k}
		 \right\},
		 \frac{p_1}{\kappa_{bj}^2\theta_{k}}
		 \right\rbrace \nonumber\\
		& = p_0 \min\left\lbrace
		\frac{p_1^2}{\kappa_{bj}^2},p_1\right\rbrace \frac{1}{2 \theta_k^2}.
	\end{align}

	Summing over iteration indices no larger than $k$ in $\mathcal{H}_1$,
	we obtain
		\begin{align*}
	\frac{p_0}{2}  \min\left\lbrace
\frac{p_1^2}{\kappa_{bj}^2},p_1\right\rbrace \sum_{
	j\in\mathcal{H}_1,	j\leq k}  \frac{1}{\theta_j^2}
&	\leq
\sum_{
	j\in\mathcal{H}_1,	j\leq k} 	(\|r_j\|^2-\|r_{j+1}\|^2)\nonumber\\
&\leq \sum_{j\leq k}(\|r_j\|^2-\|r_{j+1}\|^2) \\
&=\|r_0\|^2-\|r_{k+1}\|^2 \\
&\leq \|r_0\|^2 -\|r(x^*)\|^2,	
\end{align*}
	thus
	\begin{equation}\label{5.4}
		\sum_{k\in\mathcal{H}_1 } \frac{1}{\theta_k^2} \leq \left( 	
		\frac{p_0}{2}  \min\left\lbrace
		\frac{p_1^2}{\kappa_{bj}^2},p_1\right\rbrace \right)^{-1} \left(
		\|r_0\|^2
		-\|r(x^*)\|^2\right) .
	\end{equation}
	
If $\mathcal{H}_1$ is infinite, denote $\mathcal{H}_1=\{k_1,k_2,\ldots\}$.
For auxiliary reasons, set $k_0=-1$ and 	$\frac{1}{\theta_{-1}}=0$.
	By (\ref{2.7a}),
	\begin{align*}
		\theta_k \geq a_1^{k-k_i-1}a_2\theta_{k_i}, \quad k=k_i+1,\ldots,k_{i+1},
	\end{align*}
	which yields
	\begin{equation*}
		\sum_{k=k_i+1}^{k_{i+1}} \frac{1}{\theta_k^2} \leq
		\frac{a_2^{-2}}{1-a_1^{-2}} \frac{1}{\theta_{k_i}^2}.
	\end{equation*}
	So by \eqref{5.4},
	\begin{align*}\sum_{k=0}^{\infty}\frac{1}{\theta_k^2} &=
	\sum_{i=0}^{\infty} \sum_{k=k_i+1}^{k_{i+1}} \frac{1}{\theta_k^2}
	\nonumber\\
 &\leq
		\frac{a_2^{-2}}{1-a_1^{-2}} \sum_{i=0}^{\infty}
		\frac{1}{\theta_{k_i}^2}= \frac{a_2^{-2}}{1-a_1^{-2}} \sum_{k\in
		\mathcal{H}_1} \frac{1}{\theta_k^2} \\
		&\leq \frac{a_2^{-2}}{1-a_1^{-2}} \left( 	\frac{p_0}{2}
		\min\left\lbrace
		\frac{p_1^2}{\kappa_{bj}^2},p_1\right\rbrace \right)^{-1} \left(
		\|r_0\|^2
		-\|r(x^*)\|^2\right).
	\end{align*}
	If  $\mathcal{H}_1$ is finite, the 	same bound could be derived by the similar analysis.
	\end{proof}

Based on Lemma \ref{lem3} and Lemma \ref{lem5.2}, we have the following result.

\begin{lemma}\label{lem5.1}
	Suppose that Assumptions  \ref{ass:2} and \ref{ass:1} hold.	For  $\eta \in (0,1)$,
	if  (\ref{2.2realiz}) holds		 with
	$\xi_1 \leq  \frac{\eta (1-p_0)}{2+ \eta (1-p_0)}$,  and if
 	\begin{equation}\label{lem5.1theta}
		\frac{1}{	\theta_k} \leq  \kappa_{\theta}\|\nabla f(x_k)\|,
	\end{equation}
	where $\kappa_{\theta}=\frac{1-\xi_1}{\xi_2a_1+ c_{\theta}}$ with
		$c_{\theta}=\max \left\lbrace  \frac{\frac{2\xi_2 a_1}{1-\xi_1}+
		\kappa_{bj}^2+  2(\kappa_{lr}^2+\kappa_{lj}\|r_0\|)
		}{(1-\eta)(1-p_0)},   p_2 \right\rbrace$, 	
	then
	$\rho_k \ge p_0$ and $\theta_k$ is   decreased.   
\end{lemma}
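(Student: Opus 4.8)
The plan is to derive a single lower bound of the form $\|\nabla f_{m_k}(x_k)\| \ge c_\theta/\theta_k$ from the hypothesis \eqref{lem5.1theta}, and then read off both conclusions from it: the fact that $c_\theta \ge \frac{\frac{2\xi_2 a_1}{1-\xi_1} + \kappa_{bj}^2 + 2(\kappa_{lr}^2 + \kappa_{lj}\|r_0\|)}{(1-\eta)(1-p_0)}$ will let me invoke Lemma \ref{lem3} to get $\rho_k \ge p_0$, while the fact that $c_\theta \ge p_2$ will place the parameter update into the decreasing branch of \eqref{2.7a}.

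First I would bound $\|\nabla f_{m_k}(x_k)\|$ from below using the accuracy condition \eqref{2.2realiz}, namely $\|\nabla f_{m_k}(x_k)\| \ge (1-\xi_1)\|\nabla f(x_k)\| - \xi_2\|d_{k-1}\|$. Next I would control the two terms on the right: combining \eqref{3.19} and \eqref{3.17} gives $\|d_{k-1}\| \le 1/\theta_{k-1} \le a_1/\theta_k$, while the hypothesis \eqref{lem5.1theta}, rearranged, gives $\|\nabla f(x_k)\| \ge 1/(\kappa_\theta \theta_k)$ (legitimate since $\kappa_\theta > 0$ as $\xi_1 < 1$). Substituting both estimates yields $\|\nabla f_{m_k}(x_k)\| \ge \frac{1}{\theta_k}\left(\frac{1-\xi_1}{\kappa_\theta} - \xi_2 a_1\right)$.

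The crux is the next step: inserting the definition $\kappa_\theta = \frac{1-\xi_1}{\xi_2 a_1 + c_\theta}$ gives $\frac{1-\xi_1}{\kappa_\theta} = \xi_2 a_1 + c_\theta$, so the parenthesis collapses exactly to $c_\theta$, leaving $\|\nabla f_{m_k}(x_k)\| \ge c_\theta/\theta_k$. This cancellation is precisely what the choice of $\kappa_\theta$ is engineered to produce, so there is no genuine obstacle here — the only work is assembling the inequalities in the correct order. Since $c_\theta$ is a maximum of two quantities, I obtain simultaneously $\theta_k\|\nabla f_{m_k}(x_k)\| \ge c_\theta \ge \frac{\frac{2\xi_2 a_1}{1-\xi_1} + \kappa_{bj}^2 + 2(\kappa_{lr}^2 + \kappa_{lj}\|r_0\|)}{(1-\eta)(1-p_0)}$, which is exactly condition \eqref{lem3theta}; together with $\xi_1 \leq \frac{\eta(1-p_0)}{2+\eta(1-p_0)}$, Lemma \ref{lem3} then delivers $\rho_k \ge p_0$.

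Finally, for the monotonicity claim I would note that $\nabla f_{m_k}(x_k) = J_{m_k}^T r(x_k)$, so the same lower bound reads $\|J_{m_k}^T r(x_k)\| \ge c_\theta/\theta_k \ge p_2/\theta_k$, now using the other term in the maximum defining $c_\theta$. Combined with $\rho_k \ge p_0$, this places the iteration in the last branch of \eqref{2.7a}, whence $\theta_{k+1} = \max\{a_2\theta_k, \theta_{\min}\}$; since $a_2 < 1$, the parameter $\theta_k$ is decreased, completing the proof.
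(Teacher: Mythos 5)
Your proof is correct and follows essentially the same route as the paper: both derive the key bound $\|\nabla f_{m_k}(x_k)\|\ge c_\theta/\theta_k$ from \eqref{2.2realiz}, the estimate $\|d_{k-1}\|\le a_1/\theta_k$, and the defining cancellation in $\kappa_\theta$, and then read off the two conclusions from the two terms in the maximum defining $c_\theta$. Your write-up is slightly more explicit than the paper's in verifying that \eqref{lem3theta} holds before invoking Lemma \ref{lem3}, but the argument is the same.
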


\begin{proof}
	Since $\|d_k\| \leq \frac{1}{\theta_k} $ and $ \theta_k \leq a_1\theta_{k-1}$,  by (\ref{2.2realiz}),
	\begin{align*}
		\| \nabla f_{m_k} (x_k) -\nabla f(x_k)\| \leq \xi_1 \|\nabla f(x_k)\|+
		\frac{\xi_2 a_1}{\theta_k},
	\end{align*}
	which implies that
	\begin{align}\label{bu1}
		-  \frac{	\xi_2 a_1}{\theta_k} \leq \xi_1 \|\nabla f(x_k)\|-\|
		\nabla f_{m_k} (x_k) -\nabla f(x_k)\| .
	\end{align}	
By (\ref{lem5.1theta}),
	\begin{align}\label{5.7}
		  \frac{\xi_2a_1+ c_{\theta}}{	\theta_k} \leq  (1-\xi_1)\|\nabla
		f(x_k)\|.
	\end{align}
Combining \eqref{bu1} and \eqref{5.7}, we have
	\begin{align*}
		 \frac{c_{\theta}}{\theta_k} \leq  (1-\xi_1)\|\nabla f(x_k)\|+ \xi_1
		\|\nabla f(x_k)\|-\| \nabla f_{m_k} (x_k) -\nabla f(x_k)\|\leq\|\nabla
		f_{m_k} (x_k)\|.
	\end{align*}
	Therefore
	\begin{align}\label{bu2}
		\theta_k \ge \frac{c_{\theta}}{\|\nabla f_{m_k} (x_k)\|}
	  \ge
		\frac{p_2}{\|\nabla f_{m_k} (x_k)\|}  .
	\end{align}
		By Lemma \ref{lem3} and \eqref{2.7a}, $\rho_k\geq p_0$ and $\theta_k$
		is   decreased.   
\end{proof}

Define the set
\begin{align}
\mathcal{H}_2=\left\lbrace k\in \N: \rho_k \ge p_0,  \|\nabla
f_{m_k}(x_k)\|  \ge  \frac{p_2}{\theta_{k}}  \right\rbrace.
\end{align}
	Define
\begin{align}
	y_k=\begin{cases}
	1,	& \text{if}\ k\in\mathcal{H}_2,\\
	0,	& \text{otherwise},
	\end{cases}
\end{align}
and
\begin{align}\label{zl}
	z_k=\begin{cases}
		1,	& \text{if  (\ref{2.2realiz}) holds		 with
			$\xi_1 \leq  \frac{\eta (1-p_0)}{2+ \eta (1-p_0)}$},   \\
		0,	& \text{otherwise}.
	\end{cases}
\end{align}
Denote $\hat k$ as the smallest number in $[0,\ldots,k]$
such that
 $$\|\nabla f(x_{\hat{k}})\|=\min_{l\in[0,\ldots,k]}\|\nabla f(x_l)\|.
 $$

\begin{lemma}\label{lem5.3}
	Suppose that Assumptions  \ref{ass:2} and \ref{ass:1} hold. For any realization
	of Algorithm \ref{alg1} and $k$,
	\begin{align}\label{lem5.3eq}
		\sum_{l=0}^{k-1}   z_l <
		\frac{\beta}{(\min\{a_2\theta_0^{-1},\kappa_{\theta}\|\nabla f
		(x_{\hat{k}})\|\})^2} + \frac{\ln(a_1)}{\ln(a_1/a_2)} k.
	\end{align}

\end{lemma}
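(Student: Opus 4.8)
The plan is to bound the number of accurate-model iterations $\sum_{l=0}^{k-1}z_l$ by splitting the indices according to the size of $1/\theta_l$ relative to the threshold $T:=\min\{a_2\theta_0^{-1},\,\kappa_\theta\|\nabla f(x_{\hat k})\|\}$, namely
\[
\sum_{l=0}^{k-1}z_l=\sum_{\substack{l<k,\ z_l=1\\ 1/\theta_l\ge T}}1\;+\;\sum_{\substack{l<k,\ z_l=1\\ 1/\theta_l<T}}1,
\]
and to estimate the two pieces by different mechanisms. For the first piece, each index $l$ with $1/\theta_l\ge T$ contributes at least $T^2$ to $\sum_l 1/\theta_l^2$; since Lemma \ref{lem5.2} gives $\sum_{l=0}^{k-1}1/\theta_l^2\le\sum_{l=0}^{\infty}1/\theta_l^2\le\beta$, the number of such indices, and hence the first sum, is at most $\beta/T^2$, which is exactly the first term of \eqref{lem5.3eq}.

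For the second piece I first show that every contributing index is a genuine (unfloored) $\theta$-decreasing iteration. If $z_l=1$ and $1/\theta_l<T$, then since $\hat k$ minimizes $\|\nabla f\|$ over $[0,k]$ and $l\le k-1$, we have $1/\theta_l<\kappa_\theta\|\nabla f(x_{\hat k})\|\le\kappa_\theta\|\nabla f(x_l)\|$, so hypothesis \eqref{lem5.1theta} of Lemma \ref{lem5.1} holds and therefore $\rho_l\ge p_0$ with $\theta$ decreased (so $\|\nabla f_{m_l}(x_l)\|\ge p_2/\theta_l$, i.e.\ $l\in\mathcal H_2$, and $\theta_{l+1}=\max\{a_2\theta_l,\theta_{\min}\}$). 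Moreover $1/\theta_l<a_2\theta_0^{-1}$ forces $\theta_l>\theta_0/a_2\ge\theta_{\min}/a_2$, so no flooring occurs and $\theta_{l+1}=a_2\theta_l$. Hence the second sum is bounded by the total number $N_\downarrow$ of unfloored decreasing iterations in $[0,k-1]$; writing $N_\uparrow$ for the number of increasing iterations ($\theta_{l+1}=a_1\theta_l$), the two index sets are disjoint, so $N_\uparrow+N_\downarrow\le k$.

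It then remains to count $N_\downarrow$ through the log-trajectory of $\theta$. Telescoping $\ln\theta_k-\ln\theta_0=\sum_{l=0}^{k-1}(\ln\theta_{l+1}-\ln\theta_l)$ and noting that increases contribute $+\ln a_1$, unfloored decreases contribute $\ln a_2=-\ln(1/a_2)$, and all remaining steps (stays, and floored decreases with $\theta_{l+1}=\theta_{\min}\le\theta_l$) contribute $\le0$, I obtain $N_\downarrow\ln(1/a_2)\le N_\uparrow\ln a_1+(\ln\theta_0-\ln\theta_k)$. Combining this with $N_\uparrow\le k-N_\downarrow$ gives $N_\downarrow\ln(a_1/a_2)\le k\ln a_1+(\ln\theta_0-\ln\theta_k)$, and using $\theta_k\ge\theta_{\min}$ so that $\ln\theta_0-\ln\theta_k\le0$ yields $N_\downarrow\le\frac{\ln a_1}{\ln(a_1/a_2)}k$, the second term of \eqref{lem5.3eq}. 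Adding the two estimates proves the claim.

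The main obstacle is this final counting step: extracting the exact coefficient $\frac{\ln a_1}{\ln(a_1/a_2)}$ from the balance between increasing and unfloored decreasing iterations, while correctly handling the floor at $\theta_{\min}$ and the boundary contribution $\ln\theta_0-\ln\theta_k$. The delicate points are verifying that the decreasing iterations relevant to the second sum never hit the floor, so that each truly multiplies $\theta$ by $a_2$, and ensuring that the endpoint term does not spoil the clean fraction of $k$ (which is where the normalization $\theta_0=\theta_{\min}$, hence $\theta_k\ge\theta_0$, is used). It is precisely for these two purposes that the cutoff $a_2\theta_0^{-1}$ inside $T$ and the lower bound $\theta_k\ge\theta_{\min}$ enter the argument.
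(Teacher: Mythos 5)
Your overall strategy coincides with the paper's: split the indices by comparing $1/\theta_l$ with the threshold $T=\min\{a_2\theta_0^{-1},\kappa_\theta\|\nabla f(x_{\hat k})\|\}$, bound the above-threshold indices by $\beta/T^2$ via Lemma \ref{lem5.2}, identify each below-threshold index with $z_l=1$ as an unfloored $a_2$-decrease via Lemma \ref{lem5.1}, and count the decreases through the multiplicative trajectory of $\theta$. The first piece and the identification step are correct and match the paper. The gap is in the final counting step. You telescope $\ln\theta_k-\ln\theta_0$ over the whole range $[0,k-1]$ and need the endpoint term $\ln\theta_0-\ln\theta_k$ to be nonpositive; but the algorithm only guarantees $\theta_k\ge\theta_{\min}$, and the paper assumes only $\theta_0\ge\theta_{\min}$, not $\theta_0=\theta_{\min}$ (that equality holds only in the numerical experiments). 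Without that normalization your argument yields $N_\downarrow\le\frac{\ln a_1}{\ln(a_1/a_2)}k+\frac{\ln(\theta_0/\theta_{\min})}{\ln(a_1/a_2)}$, which is weaker than \eqref{lem5.3eq}; the lemma is claimed for any realization of Algorithm \ref{alg1}, so this extra additive term is a genuine shortfall.

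The fix is to count only the below-threshold decreases (not the total $N_\downarrow$) and to stop the telescoping at $\bar k+1$, where $\bar k$ is the last index with $1/\theta_{\bar k}<T$. At that index $1/\theta_{\bar k}<a_2\theta_0^{-1}$ gives $\theta_{\bar k+1}\ge a_2\theta_{\bar k}>\theta_0$, so the endpoint term is strictly negative with no extra assumption, and one obtains the clean coefficient $\frac{\ln a_1}{\ln(a_1/a_2)}(\bar k+1)\le\frac{\ln a_1}{\ln(a_1/a_2)}k$. This is the second, and essential, purpose of the cutoff $a_2\theta_0^{-1}$ inside $T$: your write-up uses it only to rule out the floor at $\theta_{\min}$, but it is also what pins down the endpoint of the trajectory. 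With this modification your proof closes and coincides with the paper's.
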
			
	\begin{proof}
	
	For each $l\in[0,\cdots,k-1]$, define
	\begin{align}\label{5-3-1}
		v_l=\begin{cases}
			1,	& \text{if  } \frac{1}{\theta_l}< \min\{a_2 \theta_0^{-1},\kappa_{\theta}\|\nabla f (x_{\hat{k}})\|\},  \\
			0,	& \text{otherwise}.
		\end{cases}
	\end{align}	
	When $v_l=1$, if $z_l=1$,  then by Lemma \ref{lem5.1}, we have $y_l=1$, hence
	\begin{equation}\label{a1}
		z_l \leq (1-v_l)+v_l y_l.
	\end{equation}
Note that \eqref{a1} also holds when $v_l=0$ because $z_l \leq 1$ for all $l$.
So \eqref{a1} holds true for all $l$.

    By	 (\ref{5-3-1}),
    \begin{equation*}
    	1-v_l \leq \frac{\theta_l^{-2}}{(\min\{a_2
    	\theta_0^{-1},\kappa_{\theta}\|\nabla f (x_{\hat{k}})\|\})^2}.
    \end{equation*}
It then follows from Lemma \ref{lem5.2} that
	\begin{align}\label{a2}
		\sum_{l=0}^{k-1} (1-v_l) \leq
		\frac{\beta}{(\min\{a_2\theta_0^{-1},\kappa_{\theta}\|\nabla f
		(x_{\hat{k}})\|\})^2}.
	\end{align}
	
		If $v_l y_l=1$, then $y_l=1$ and $v_l=1$. These imply that  $\frac{1}{\theta_l}< a_2\theta_0^{-1}
		\leq a_2\theta_{\min}^{-1} $, hence
		$\theta_{l+1}=\max\{a_2\theta_l, \theta_{\min}\}=a_2 \theta_l$.
Note that $\theta_{l+1} \leq  a_1 \theta_{l}$ holds for all $l$. Hence,
	\begin{align}\label{5-3-2}
		\theta_k \leq \theta_0 \prod_{l=0}^{k-1} a_1^{1-v_ly_l}  a_2^{v_ly_l}.
	\end{align}

	Denote $\bar{k}=\max\{l:v_l=1, l\in[0,\ldots,k-1] \}$.
		Since $v_{\bar{k}}=1$, by (\ref{5-3-1}), $\frac{1}{\theta_{\bar{k}}}<
a_2\theta_0^{-1} $, then
$\theta_{\bar{k}+1} > \theta_0$
because $\theta_{l+1} \ge a_2 \theta_l$ for all $l$.
	Therefore, by (\ref{5-3-2}),
		\begin{align*}
 	\theta_0  <	\theta_{\bar{k}+1}  \le
\theta_0 \prod_{l=0}^{\bar{k}}a_1^{1-v_ly_l}a_2^{v_ly_l},
	\end{align*}
	which leads to
	\begin{align*}
 	0 <   \sum_{l=0}^{\bar{k}} (\ln(a_1)(1-v_ly_l)
+\ln(a_2)v_ly_l)=(\bar{k}+1)\ln(a_1)+\sum_{l=0}^{\bar{k}}v_ly_l
\ln(a_2/a_1).
	\end{align*}
	Then
	\begin{equation}\label{c2}
 	\sum_{l=0}^{\bar{k}}v_ly_l <
		\frac{\ln(a_1)}{\ln(a_1/a_2)}(\bar{k}+1)
		\leq\frac{\ln(a_1)}{\ln(a_1/a_2)}k.
	\end{equation}
	Therefore
	\begin{equation}\label{a3}
	 \sum_{l=0}^{k-1} v_ly_l=\sum_{l=0}^{\bar{k}}   v_ly_l <
	 \frac{\ln(a_1)}{\ln(a_1/a_2)} k.
	\end{equation}

Combining (\ref{a1}), (\ref{a2}) and (\ref{a3}), we obtain (\ref{lem5.3eq}).
		\end{proof}

	\begin{theorem}\label{the5.5}
		
	Suppose that Assumptions  \ref{ass:2} and \ref{ass:1} hold.  For $0< \epsilon
	\leq  \frac{a_2}{\kappa_{\theta} \theta_0}$ and $ \eta \in (0,1)$,
take
$\gamma_k=\|d_{k-1}\|$, $b\ge n\Big/\left(
\frac{\eta^2(1-p_0)^2}{2 \left( 2+\eta
	(1-p_0)\right) ^2} +1\right)$.
If
	\begin{equation}\label{the5-1}
	k \ge \frac{2\beta}{ \left(
	\frac{1}{2}-\frac{\ln(a_1)}{\ln(a_1/a_2)}\right)\kappa_{\theta}^2
	\epsilon^2},
	\end{equation}
	then
	\begin{align}\label{the5eq}
	\P \left(  \|\nabla f(X_{\hat{k}})\| \leq
\left(\frac{2\beta}{ \left(
	\frac{1}{2}-\frac{\ln(a_1)}{\ln(a_1/a_2)}\right)\kappa_{\theta}^2
	k}\right)^{1/2}\right)
\ge  1- \exp\left( -\frac{(\frac{1}{2}-\frac{\ln(a_1)}{\ln(a_1/a_2)})^2}{4}k
\right).
	\end{align}

	\end{theorem}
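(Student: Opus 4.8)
The plan is to combine the pathwise counting estimate of Lemma \ref{lem5.3} with the probabilistic lower bound on the number of accurate iterations furnished by Remark \ref{remark}, and then close the argument with a martingale concentration inequality. Throughout, write $\mu=\frac{\ln(a_1)}{\ln(a_1/a_2)}$ and $c=\frac12-\mu$; note that the input condition $a_1a_2<1$ gives $\ln(a_1)<\ln(1/a_2)$, hence $\mu<\frac12$ and $c>0$, so that the quantity $\epsilon_k:=\big(\frac{2\beta}{c\,\kappa_\theta^2 k}\big)^{1/2}$ appearing in \eqref{the5eq} is well defined and positive. Since $k\ge \frac{2\beta}{c\,\kappa_\theta^2\epsilon^2}$ is equivalent to $\epsilon_k\le\epsilon$, the hypotheses give $\epsilon_k\le\epsilon\le \frac{a_2}{\kappa_\theta\theta_0}$, and it suffices to bound $\mathbb{P}(\|\nabla f(X_{\hat k})\|>\epsilon_k)$.

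First I would carry out the deterministic reduction. Fix any realization of Algorithm \ref{alg1} on which $\|\nabla f(x_{\hat k})\|>\epsilon_k$. Because $\epsilon_k\le \frac{a_2}{\kappa_\theta\theta_0}$, i.e. $\kappa_\theta\epsilon_k\le a_2\theta_0^{-1}$, a short case split on whether $\kappa_\theta\|\nabla f(x_{\hat k})\|$ falls below or above $a_2\theta_0^{-1}$ shows $\min\{a_2\theta_0^{-1},\kappa_\theta\|\nabla f(x_{\hat k})\|\}\ge\kappa_\theta\epsilon_k$. Substituting this into Lemma \ref{lem5.3} and using the defining identity $\frac{\beta}{\kappa_\theta^2\epsilon_k^2}=\frac{c}{2}k$ yields $\sum_{l=0}^{k-1}z_l<\frac{c}{2}k+\mu k=(\frac12-\frac c2)k$, where the last equality is the algebraic identity $\frac c2+\mu=\frac14+\frac\mu2=\frac12-\frac c2$. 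Thus the event $\{\|\nabla f(X_{\hat k})\|>\epsilon_k\}$ is contained in $\{\sum_{l=0}^{k-1}z_l<(\frac12-\frac c2)k\}$.

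Next comes the probabilistic step, which is the crux. With $\gamma_k=\|d_{k-1}\|$ and the prescribed sampling size $b$, Remark \ref{remark} gives $\frac12$-probabilistic $(\xi_1,\xi_2)$-first-order accuracy with $\xi_1\le \frac{\eta(1-p_0)}{2+\eta(1-p_0)}$; recalling the definition \eqref{zl} of $z_l$ and of the events $A_k$ in Definition \ref{def2.1}, this is precisely $\mathbb{P}(z_l=1\mid\mathcal{F}_{l-1})=\mathbb{P}(A_l\mid\mathcal{F}_{l-1})\ge\frac12$ (here $X_l$ and $D_{l-1}$ are $\mathcal{F}_{l-1}$-measurable and the only fresh randomness in $z_l$ is the $l$-th batch of spherical directions). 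Hence $\{z_l-\mathbb{E}[z_l\mid\mathcal{F}_{l-1}]\}_{l\ge0}$ is a bounded-difference martingale difference sequence, and writing $W_k=\sum_{l=0}^{k-1}(z_l-\mathbb{E}[z_l\mid\mathcal{F}_{l-1}])$ one has $\sum_{l=0}^{k-1}z_l\ge \frac k2+W_k$. Consequently $\{\sum_{l=0}^{k-1}z_l<(\frac12-\frac c2)k\}\subseteq\{W_k<-\frac c2 k\}$, and an Azuma--Hoeffding bound on $W_k$ gives $\mathbb{P}(W_k<-\frac c2 k)\le \exp(-\frac{c^2}{4}k)$. Chaining the two inclusions produces $\mathbb{P}(\|\nabla f(X_{\hat k})\|>\epsilon_k)\le \exp(-\frac{c^2}{4}k)$, which is the complement of \eqref{the5eq}.

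The main obstacle is the adaptedness bookkeeping in the concentration step rather than any single estimate: the indicators $z_l$ are far from independent, since each iterate and step length depends on all past random directions, so one must verify that conditioning on $\mathcal{F}_{l-1}$ freezes $X_l$ and $D_{l-1}$ and leaves exactly the conditional success probability controlled by Remark \ref{remark}. A second delicate point is that the index $\hat k$ is itself random and correlated with the $z_l$; this is harmless only because Lemma \ref{lem5.3} is a genuinely pathwise inequality, valid on every realization, so the deterministic reduction of the second paragraph may be performed inside the event before any probability is taken. Finally, one should confirm the tail constant: the symmetric-range Azuma bound in fact yields the smaller exponent $\frac{c^2}{2}k$, so the stated rate $\frac{c^2}{4}k$ holds with room to spare.
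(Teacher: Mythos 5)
Your proof is correct and follows the paper's overall skeleton---reduce the event $\{\|\nabla f(X_{\hat k})\|>\epsilon_k\}$ pathwise to a deficit in $\sum_{l<k}z_l$ via Lemma \ref{lem5.3}, then apply a concentration inequality to $\sum_{l<k}Z_l$---but your concentration step is genuinely different and arguably more careful. The paper sets $\alpha=\P(Z_l=1)$ and invokes the multiplicative Chernoff lower-tail bound \eqref{5.17} as though $\sum_{l=0}^{k-1}Z_l$ were a sum of independent Bernoulli variables; since the $Z_l$ are dependent (each iterate and step depend on all past random directions), that step implicitly rests on the standard stochastic-domination argument from $\P(Z_l=1\mid\mathcal F_{l-1})\ge\tfrac12$, which the paper does not spell out. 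Your Azuma--Hoeffding martingale argument makes exactly this conditioning explicit and is the cleaner route; it also yields the slightly stronger exponent $\tfrac{c^2}{2}k$, \emph{provided} you invoke the Hoeffding form for martingale differences confined to an interval of length one --- the genuinely ``symmetric'' version with $|W_l-W_{l-1}|\le 1$ gives only $\exp(-\tfrac{c^2}{8}k)$, which would not suffice for the stated rate, so that phrase should be corrected even though the intended bound is available. Your deterministic reduction also streamlines the paper's: the paper keeps a general $\epsilon\ge\epsilon_k$, uses monotonicity of $\pi_k(\cdot)$, and passes through the intermediate value $\nu=\tfrac{\alpha+\mu}{2}$ in \eqref{5.23}, whereas you specialize to $\epsilon=\epsilon_k$ at the outset; the two are equivalent, and your algebra ($\tfrac{\beta}{\kappa_\theta^2\epsilon_k^2}=\tfrac{c}{2}k$ and $\tfrac{c}{2}+\mu=\tfrac12-\tfrac{c}{2}$) and your handling of the random index $\hat k$ via the pathwise nature of Lemma \ref{lem5.3} are both sound.
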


\begin{proof}
	By remark \ref{remark}, $\{\nabla
	f_{M_k}(X_k)\}$ generated 		 by 	 		 Algorithm \ref{alg1}
	are	
	$\frac{1}{2}$-probabilistically
	$(\xi_1,\xi_2)$-first-order 	accurate with
	$\xi_1 \leq  \frac{\eta (1-p_0)}{2+ \eta (1-p_0)}$.
	Since $a_1>1>a_2>0$ and $a_1a_2<1$, it can be proved that
	$\frac{\ln(a_1)}{\ln(a_1/a_2)}<
	\frac{1}{2}$.

Let $\alpha=\P (Z_l=1)$, then $\mathbb{E}[\sum_{l=0}^{k-1}Z_l ]=\alpha k$.
 The multiplicative Chernoff bound shows that the lower tail of $\sum_{l=0}^{k-1}Z_l$ satisfies
	 \begin{align}\label{5.17}
	 	\P\left( \sum_{l=0}^{k-1}Z_l \leq (1-\sigma)\alpha k \right) \leq
	 	\exp\left(- \frac{\alpha \sigma^2}{2}k\right), \quad \forall
	 	1>\sigma>0.
	 \end{align}	
For $\nu \in (0,\alpha)$, define
	\begin{align}
			\pi_k(\nu) := \P\left( \sum_{l=0}^{k-1}Z_l \leq \nu k\right).
\end{align}
Setting $\sigma= \frac{\alpha- \nu}{\alpha}$ in \eqref{5.17}, we obtain
	\begin{align}\label{5-4-1}
			\pi_k(\nu)   \leq \exp\left(- \frac{(\alpha-\nu)^2}{2\alpha}k \right).
		\end{align}	
		
By Lemma \ref{lem5.3}, for any given $\epsilon$ satisfying
	\begin{equation}\label{m1}
			0< \epsilon \leq  \frac{a_2}{\kappa_{\theta} \theta_0},
	\end{equation}
    we have
	\begin{align}\label{m2}
		\P \left(\|\nabla f(X_{\hat{k}})\|  \ge \epsilon \right)
		&\leq \P \left(   \sum_{l=0}^{k-1} Z_l \leq \frac{\beta}{
		\kappa_{\theta}^2 \epsilon^2}+ \frac{\ln(a_1)}{\ln(a_1/a_2)} k
		\right).
	\end{align}
 Since $\alpha=\frac{1}{2}> \frac{\ln(a_1)}{\ln(a_1/a_2)}$,
for any
	\begin{equation*}
		k \ge \frac{2\beta}{ \left(
			\frac{1}{2}-\frac{\ln(a_1)}{\ln(a_1/a_2)}\right)\kappa_{\theta}^2
			\epsilon^2}
		= \frac{2\beta}{\left( \alpha-
			\frac{\ln(a_1)}{\ln(a_1/a_2)}\right) \kappa_{\theta}^2 \epsilon^2},
	\end{equation*}
 by \eqref{m1},
\begin{align}\label{5.23}
0<\frac{\beta}{ \kappa_{\theta}^2
		\epsilon^2 k}+\frac{\ln(a_1)}{\ln(a_1/a_2)} \leq
		\frac{\alpha+\frac{\ln(a_1)}{\ln(a_1/a_2)}}{2}<\alpha.
\end{align}
Note that $\pi_k(\cdot )$ is a monotone increasing function.
		By \eqref{5-4-1}, (\ref{m2})--\eqref{5.23},
		\begin{align}\label{key1}
\P\left( \|\nabla f(X_{\hat{k}})\| \leq \epsilon  \right)
&\ge
		1-\P\left( \sum_{l=0}^{k-1} Z_l \leq \frac{\beta}{ \kappa_{\theta}^2
		\epsilon^2}+\frac{\ln(a_1)}{\ln(a_1/a_2)} k   \right)  \nonumber\\
		& = 1- \pi_k\left(\frac{\beta}{ \kappa_{\theta}^2
		\epsilon^2 k}+\frac{\ln(a_1)}{\ln(a_1/a_2)}  \right)\nonumber\\
&		\ge 1-\pi_k\left(
		\frac{\alpha+\frac{\ln(a_1)}{\ln(a_1/a_2)}}{2}
		\right)
		 \nonumber\\
		& \ge  1- \exp\left(
		-\frac{\left(\alpha-\frac{\ln(a_1)}{\ln(a_1/a_2)}\right)^2}{8 \alpha}k
		\right),\nonumber\\
&=	1- \exp\left(
		-\frac{(\frac{1}{2}-\frac{\ln(a_1)}{\ln(a_1/a_2)})^2}{4}k
		\right).	
	\end{align}
which gives \eqref{the5eq} when the equality holds in \eqref{the5-1}.
	\end{proof}


Based on the above, 	we can give a worst-case bound of $O(\epsilon^{-2})$
for
	$K_{\epsilon}$ with high probability. Notice that  $K_{\epsilon}$ is a
	random variable and $\P\left( K_{\epsilon} \leq k\right) =\P\left(
	\|\nabla f(X_{\hat{k}})\|  \leq \epsilon \right)  $.	
Applying  (\ref{key1}), we have the following result.
	
	\begin{theorem}\label{last}
 	Suppose that Assumptions  \ref{ass:2} and \ref{ass:1} hold.
 For $0< \epsilon \leq  \frac{a_2}{\kappa_{\theta} \theta_0}$ and $ \eta \in
 (0,1)$,
 take
 $\gamma_k=\|d_{k-1}\|$ and $b\ge n\Big/\left(
 \frac{\eta^2(1-p_0)^2}{2 \left( 2+\eta
 	(1-p_0)\right) ^2} +1\right)$, then
		 \begin{align}
		 	\P \left( K_{\epsilon} \leq \left \lceil   \frac{2\beta}{\left(
		 		\frac{1}{2}-\frac{\ln(a_1)}{\ln(a_1/a_2)}\right)
		 		\kappa_{\theta}^2
		 		\epsilon^2}  \right \rceil  \right)
\ge 1-\exp \left(-
		 		\frac{\left(
		 	\frac{1}{2}-\frac{\ln(a_1)}{\ln(a_1/a_2)}\right) \beta}{2
		 		\kappa_{\theta}^2 \epsilon^2}\right) .
		 \end{align}
	\end{theorem}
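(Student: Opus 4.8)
The plan is to derive Theorem~\ref{last} as an essentially immediate corollary of Theorem~\ref{the5.5}, by reinterpreting the same high-probability guarantee through the random stopping time $K_{\epsilon}$. The starting observation is the identity already noted before the statement, namely $\P(K_{\epsilon} \leq k) = \P(\|\nabla f(X_{\hat{k}})\| \leq \epsilon)$, which holds because $\|\nabla f(X_{\hat{k}})\|$ is the running minimum of the gradient norms over $[0,\ldots,k]$ and is therefore monotone nonincreasing in $k$. Thus the event that the first-order measure has dropped below $\epsilon$ by iteration $k$ is exactly the event that the stopping time $K_{\epsilon}$ does not exceed $k$.

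First I would fix $k = \left\lceil \frac{2\beta}{(\frac{1}{2}-\frac{\ln(a_1)}{\ln(a_1/a_2)})\kappa_{\theta}^2 \epsilon^2}\right\rceil$, which is precisely the smallest integer satisfying the hypothesis \eqref{the5-1} of Theorem~\ref{the5.5}. For this $k$, Theorem~\ref{the5.5} applies under the stated assumptions and the same choices $\gamma_k=\|d_{k-1}\|$ and $b\ge n/(\frac{\eta^2(1-p_0)^2}{2(2+\eta(1-p_0))^2}+1)$, so the final bound \eqref{key1} in its proof gives
\begin{align*}
\P\left(\|\nabla f(X_{\hat{k}})\| \leq \epsilon\right) \ge 1 - \exp\left(-\frac{(\frac{1}{2}-\frac{\ln(a_1)}{\ln(a_1/a_2)})^2}{4}k\right).
\end{align*}
Substituting $\P(\|\nabla f(X_{\hat{k}})\| \leq \epsilon) = \P(K_{\epsilon} \leq k)$ converts the left-hand side into the desired probability involving $K_{\epsilon}$ with the ceiling expression.

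The remaining work is purely to simplify the exponent into the form claimed in the statement. I would lower-bound $k$ by dropping the ceiling, $k \ge \frac{2\beta}{(\frac{1}{2}-\frac{\ln(a_1)}{\ln(a_1/a_2)})\kappa_{\theta}^2 \epsilon^2}$, and substitute this into the exponent. Writing $c = \frac{1}{2}-\frac{\ln(a_1)}{\ln(a_1/a_2)}$ for brevity, the exponent $-\frac{c^2}{4}k$ is at most $-\frac{c^2}{4}\cdot \frac{2\beta}{c\,\kappa_{\theta}^2\epsilon^2} = -\frac{c\,\beta}{2\kappa_{\theta}^2\epsilon^2}$, which is exactly $-\frac{(\frac{1}{2}-\frac{\ln(a_1)}{\ln(a_1/a_2)})\beta}{2\kappa_{\theta}^2\epsilon^2}$; since $\exp(\cdot)$ is increasing this only enlarges the subtracted term and hence preserves the inequality direction in the lower bound on the probability.

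The only point requiring genuine care — the closest thing to an obstacle — is ensuring the monotonicity that underwrites the identity $\P(K_{\epsilon} \leq k) = \P(\|\nabla f(X_{\hat{k}})\| \leq \epsilon)$, i.e. that $\hat{k}$ is defined so that $\|\nabla f(X_{\hat k})\|$ really is the minimum over the whole history up to $k$; this is exactly how $\hat{k}$ was introduced before Lemma~\ref{lem5.3}, so it is available. One should also confirm that $c>0$, which follows from $\frac{\ln(a_1)}{\ln(a_1/a_2)} < \frac{1}{2}$ (established in the proof of Theorem~\ref{the5.5} from $a_1>1>a_2>0$ and $a_1a_2<1$), guaranteeing that both the iteration threshold and the tail exponent are well defined and negative. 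Everything else is substitution, so I expect the proof to be short.
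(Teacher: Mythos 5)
Your proposal is correct and follows essentially the same route as the paper: the paper also obtains Theorem \ref{last} as an immediate corollary of the bound \eqref{key1} from Theorem \ref{the5.5}, using the identity $\P(K_{\epsilon}\leq k)=\P(\|\nabla f(X_{\hat{k}})\|\leq\epsilon)$ and then substituting the lower bound on $k$ into the exponent. Your additional checks (that $\tfrac{1}{2}-\tfrac{\ln(a_1)}{\ln(a_1/a_2)}>0$ and that the ceiling choice of $k$ satisfies \eqref{the5-1}) are exactly the points the paper relies on implicitly.
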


	\section{ Numerical experiments}\label{sec6}

	We test  Algorithm \ref{alg1} on some nonlinear least squares problems.
 The experiments are implemented on a laptop with an Intel(R) Core(TM) i7-10710U CPU (1.61GHz) and 16.0GB of RAM, using Matlab R2020b.

	We set $p_0=0.001, p_1=0.25, p_2=0.75, a_1=4, a_2=0.25,
		\theta_0=10^{-8}, \theta_{\min}=10^{-8}, \epsilon_0=10^{-4}$, $b=n$ for all tests.
 The algorithm is terminated when the norm of $J_{m_k}^Tr(x_k)$ is
  less
 	than $\epsilon_0$   or the iteration number exceeds
		$1000(n+1)$.
		The results are listed in the tables.
``Niter" and ``NF" represent the numbers of iterations and function
calculations, respectively.
If the iterations had underflows or overflows, we denoted it by OF.
DFLM-FD represents Algorithm \ref{alg1} with  $J_{m_k}$  being computed by
		\begin{equation}\label{FD}
			[J_{m_k}]_j=\frac{r(x_k+\gamma_k e_j)-r(x_k)}{\gamma_k},\quad
			j=1,\ldots,n,
		\end{equation}
		where $[J_{m_k}]_j$ is the $j$-th column of  $ J_{m_k}$.
DFLM-OSS represents Algorithm \ref{alg1} with  $J_{m_k}$  being computed by (\ref{JA}) and (\ref{nabr}).
In DFLM-OSSv1, we generate $b$ independent random
		vectors $w_1,\ldots,w_b$ with identical distribution $\mathcal{N}(0,I)$ at each iteration,
then compute the QR decomposition of the matrix  $[w_1,\ldots,w_b]$ to obtain $u_1,\ldots,u_b$.
In DFLM-OSSv2, we first generate 10 orthogonal direction sets,
then choose one of them randomly as directions at each iteration.

		For each problem, DFLM-OSSv1
		and DFLM-OSSv2 run 60 times and give the average iteration number and
		function
		calculations.


	\begin{example}[\cite{Banderia2014}]\label{exam4.2}
Consider  \eqref{prob} with
	$$
	\begin{matrix}r_1\left(x\right)=100\left(x_1-x_2^2\right)^2
		+\left(1-x_2\right)^2,\\r_2\left(x\right)=100\left(x_2-x_3^2\right)^2+
		\left(1-x_3\right)^2,\\r_3\left(x\right)=100\left(x_3-x_1^2\right)^2+
		\left(1-x_1\right)^2.\\\end{matrix}
	$$
	The optimal solution of this problem is
	$\left(x_1^\ast,x_2^\ast,x_3^\ast\right)=\left(1,1,1\right)^T$.

The initial
point
is chosen as $x_0=10v$, where $v$ is generated by the multivariate
Gaussian distribution $\mathcal{N}(\mathbf{0},\mathbf{I})$.
	 We  apply DFLM-FD and DFLM-OSS. The results are given as follows.
	
	\begin{table}[H]
	\caption{} \label{table1}	
	\centering
		\begin{tabular}{lllll}
	\hline
	\multirow{2}{*}{$n$} & DFLM-FD        & DFLM-OSSv1      & DFLM-OSSv2      \\
	& Niter/NF   & Niter/NF  & Niter/NF    \\
	\hline
3                  & 41/204 & 35/163  & 29/139 \\ \hline
\end{tabular}
\end{table}

\end{example}

		\begin{example}[\cite{Powell2003}]\label{exam4.4}
			Consider   \eqref{prob} with
			\begin{align*}
				r_i\left(x\right)&=100\left(\left(x_i^2+x_n^2\right)^2		
-4x_i+3\right),\quad i=1,\ldots,n-1,\\r_n\left(x\right)&=100x_n^4.
\end{align*}
			The optimal solution is $x^\ast=\left(1,\ldots,1,0\right)^T$.

The initial
			point 			is chosen as $x_0=10v$, where $v$ is generated by the multivariate
			Gaussian distribution $\mathcal{N}(\mathbf{0},\mathbf{I})$. We
			 apply
			DFLM-FD and   DFLM-OSS.
The results are given
			in the following table.
	\begin{table}[H]
	\caption{} \label{table2}
		\centering
		\begin{tabular}{lllll}
	\hline
	\multirow{2}{*}{$n$} & DFLM-FD          & DFLM-OSSv1        &
	DFLM-OSSv2
	     \\
	& Niter/NF      & Niter/NF         &
	Niter/NF
	\\ \hline
30                 & 109/3431 & 83/2631  & 81/2559  \\
50                 & 148/7617  & 96/4974  & 97/4979  \\
\hline
\end{tabular}
\end{table}

		\end{example}

\begin{example}[\cite{more}] \label{286} Consider   \eqref{prob} with
	\begin{align*}
		r_i\left(x\right)=&10\left(x_i^2-x_{i+10}\right),\\
		r_{i+10}\left(x\right)=&x_i-1,  \quad	i=1,\ldots,10.
	\end{align*}
The optimal solution is
	$x^\ast=\left(1,\ldots,1\right)^T$.

We take	 	the initial 	point
 $x_0=10v$, where $v$ is generated by the multivariate
	Gaussian distribution $\mathcal{N}(\mathbf{0},\mathbf{I})$.
%
	We  apply DFLM-FD and
	DFLM-OSS. The 	results are given in the 	following table.	
	\begin{table}[H]
	\caption{} \label{table3}
		\begin{tabular}{lllll}
	\hline
	\multirow{2}{*}{$n$} & DFLM-FD         & DFLM-OSSv1       &
	DFLM-OSSv2          \\
	& Niter/NF     & Niter/NF    & Niter/NF
	\\
	\hline
20                 & 115/2473 & 88/1900 & 140/3016  \\
\hline
\end{tabular}
\end{table}

\end{example}

		\begin{example}(\cite{More81})
 	Consider   \eqref{prob} with
	\begin{align*}
	r_i\left(x\right)=&10^{-5/2}(x_i -1), \quad	i=1,\ldots,n.\\
	r_{n+1}=&\left( \sum_{i=1}^{n} x_i\right)-\frac{1}{4}     .
\end{align*}
The optimal value is $f^*=7.08765\times 10^{-5}$.

We take	$n=10$ and	 $(x_0)_i=i (i=1,\ldots,n).$
The initial points are taken as $x_0, 10x_0$ and $100x_0$.
We  apply DFLM-FD and
DFLM-OSS. The 	results are given in the 	following table.
	\begin{table}[H]
	\caption{} \label{table3}
	\begin{tabular}{llllll}
		\hline
	\multirow{2}{*}{$n$}  & \multirow{2}{*}{$x_0$} &
	DFLM-FD            & DFLM-OSSv1                    &
	DFLM-OSSv2         \\                    &                     &
	Niter/NF        & Niter/NF         &
	Niter/NF
		\\
		\hline
		\multirow{3}{*}{10} & 1   & 18/215 & 18/208 &
		17/206 \\
		& 10  & 22/258 & 24/281 & 25/297 \\
		& 100 & 30/354 & 32/384 & 32/384 \\ \hline
	\end{tabular}
\end{table}

\end{example}
	
\begin{example}

	Consider the nonlinear equations  created by modifying the standard
	nonsingular
	test functions 	given by  	 Mor\'{e}, Garbow and Hillstrom in
	\cite{More81}.	They have the  form 	
	\begin{equation}\label{modi}
		\hat{r}(x)=r(x)-J(x^*)A(A^TA)^{-1}A^T(x-x^*)
	\end{equation}
	as in \cite{ad11}, where $r(x)$ is the standard nonsingular test
	function,
	$x^*$ is its
	root,
	and $A\in R^{n\times k}$ has full column rank with $1\leq k\leq n$.
	Obviously, $\hat{r}(x^*)=0$ and
	$$
	\hat{J}(x^*)=J(x^*)(I-A(A^TA)^{-1}A^T)
	$$
	has rank $n-k$.  We take
	\begin{equation}\label{5.2}
		A\in R^{n\times 1},\qquad A^T=(1,1,\cdots, 1).
	\end{equation}

	The results are given in the following table.
	The first column is the problem number in ``Systems of Nonlinear
	Equations" (cf. \cite{More81}).
	The third column  indicates that the starting
	point is $x_0, 10x_0$, and $100x_0$, where $x_0$ is suggested in
	\cite{More81}.

	\begin{table}[H]
		\caption{} \label{table5}
		\begin{tabular}{llllll}
			\hline
			\multirow{2}{*}{Prob} & \multirow{2}{*}{$n$}  &
			\multirow{2}{*}{$x_0$} &
			DFLM-FD            & DFLM-OSSv1                    &
			DFLM-OSSv2         \\
			&                     &                     &
			Niter/NF        & Niter/NF         &
			Niter/NF              \\ \hline
			\multirow{3}{*}{1}                        &
			\multirow{3}{*}{2}                     & 1                   &
			18/71        & 14/53     & 14/52       \\
			&                                        & 10
			&
			86/320       & 24/93      & 35/136      \\
			&                                        & 100
			&
			142/559      & 46/176     & 55/211     \\ \hline
			\multirow{3}{*}{8}                        &
			\multirow{3}{*}{50}                    & 1                   &
			7/363        & 8/399     & 8/398       \\
			&                                        & 10
			&
			87/4523     & 87/4527    & 85/4409     \\
			&                                        & 100
			&
			201/10451    & OF               & OF                \\ \hline
			\multirow{3}{*}{9}                        &
			\multirow{3}{*}{50}                    & 1                   &
			2/103       & 2/103      & 2/103       \\
			&                                        & 10
			&
			3/155        & 4/210     & 4/211       \\
			&                                        & 100
			&
			10/519      & 10/531     & 10/515     \\ \hline
			\multirow{3}{*}{10}                       &
			\multirow{3}{*}{50}                    & 1                   &
			8/411        & 4/228     & 5/246       \\
			&                                        & 10
			&
			7/363      & 7/343     & 7/377      \\
			&                                        & 100
			&
			257/13358  & 11/559     & 25/1290    \\ \hline
			\multirow{3}{*}{11}                       &
			\multirow{3}{*}{50}                    & 1                   &
			12/623       & 35/1809    & 39/2001    \\
			&                                        & 10
			&
			60/3104     & 47/2448    & 52/2677     \\
			&                                        & 100
			&
			46/2373     & 53/2738    & 55/2874     \\ \hline
			\multirow{3}{*}{12}                       &
			\multirow{3}{*}{50}                    & 1                   &
			16/831      & 19/972     & 18/925     \\
			&                                        & 10
			&
			22/1143      & 27/1377   & 39/2005     \\
			&                                        & 100
			&
			3409/175561  & 958/49339 & 1312/67579  \\ \hline
			\multirow{3}{*}{13}                       &
			\multirow{3}{*}{50}                    & 1                   &
			13/669      & 11/564    & 11/558     \\
			&                                        & 10
			&
			1176/60615   & 192/9950  & 176/9127    \\
			&                                        & 100
			&
			192/9982    & 225/11692  & 237/12292  \\ \hline
			\multirow{3}{*}{14}                       &
			\multirow{3}{*}{50}                    & 1                   &
			25/1284      & 13/681    & 13/651     \\
			&                                        & 10
			&
			13/675       & 58/3026    & 69/3546     \\
			&                                        & 100
			&
			23/1195      & 108/5564   & 118/6109    \\ \hline
			\multirow{3}{*}{23}                       &
			\multirow{3}{*}{10}                    & 1                   &
			94/1124      & 86/1028    & 83/989     \\
			&                                        & 10
			&
			87/1038      & 96/1147    &100/1194     \\
			&                                        & 100
			&
			104/1241      & 104/1241   & 109/1305    \\ \hline
		\end{tabular}
	\end{table}
	
\end{example}	

	
	In the following we present the numerical results  using the
	performance profile developed by Dolan, Moré  and
	Wild\cite{Dolan2002}.
		Denote by $\mathcal{S}$ the set of solvers and $\mathcal{P}$ the set of test problems, respectively.
	If a solver $s\in{\mathcal{S}}$  gives a point
	$x$ that satisfies
	\begin{align}\label{convtest}
		|f(x)-f_p^*|\leq
		\tau,
	\end{align}
	where $\tau\in(0,1)$ is the tolerance and  $f_p^*$ is the optimal
	value of
	problem $p$ given in the literature,  we say
	$s\in{\mathcal{S}}$  solves $p\in{\mathcal{P}}$ up to the
	convergence test.
	Let $t_{p,s}$ be the least  number of function evaluations required by
	the solver  $s$ to solve the problem $p$
	up to the convergence test. If  $s$ does not satisfy the
	convergence test for  $p$ within the maximal number of function
	evaluations, then let $ t_{p,s}=\infty$.
	

	The performance profile of $s$ is defined as
	\begin{align*}\label{ra}
		\pi_{s}(\alpha)=\frac{1}{n_{p}}
		\text{size}\big\{p\in\mathcal{P}:r_{p,s}\leq\alpha\big\},
	\end{align*}
	where $	r_{p,s}=\frac{t_{p,s}}
			{\min\{t_{p,s}:s\in\mathcal{S}\}}$ is the performance ratio,
			$n_p$ is the number of test problems in $\mathcal{P}$.
 		Hence,	$\pi_{s}(1)$ is the proportion of problems that 	$s$
		 performs better than  other solvers in $\mathcal{S}$   and
	$\pi_{s}(\alpha)$ is the proportion of problems  solved by  $s$ with a
	performance ratio at most
	$\alpha$.
	The better solver is indicated by the higher $\pi_{s}(\alpha)$. 
	
%

We take $\tau=10^{-3}$ and $10^{-5}$, respectively.
	 Figure \ref{f1} shows that for
	$\tau=10^{-3}$,
	DFLM-OSSv1 and
	DFLM-OSSv2 seem  much more 	competitive  for 	$\log_2(\alpha)>0.35$ and
	finally solve about $94\%$  of the problems, while DFLM-FD has the most
	wins when $\log_2(\alpha)<0.35$ but solves   fewer problems.
	
	For $\tau=10^{-5}$, as shown in Figure \ref{f2}, both
DFLM-OSSv1
	and 	DFLM-FD perform better on about 43\% of the problems, while
	 the percentages for DFLM-OSSv2 is about 38\%.
	 DFLM-OSSv1, DFLM-OSSv2 and DFLM-FD
	solve  about 90.7\%, 88\% and 75.5\%
		of the problems, respectively.
	
	

	
	\begin{figure}[htbp]
		\centering
		\begin{subfigure}{0.49\linewidth}
			\centering
			\includegraphics[width=1\linewidth]
			{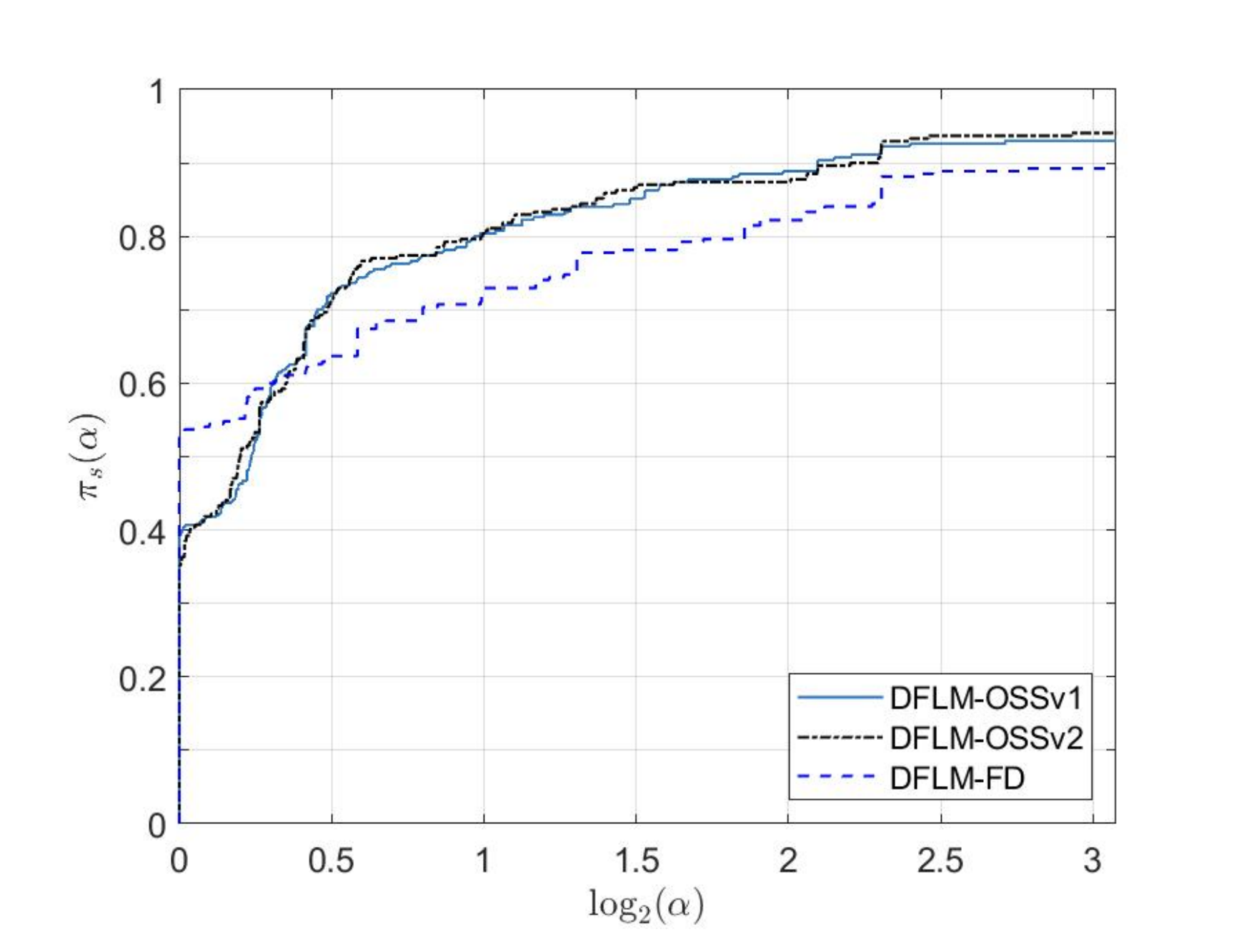}
			\caption{ $\tau=10^{-3}$ }
			\label{f1}
		\end{subfigure}
		\centering
		\begin{subfigure}{0.49\linewidth}
			\centering
			\includegraphics[width=1\linewidth]
			{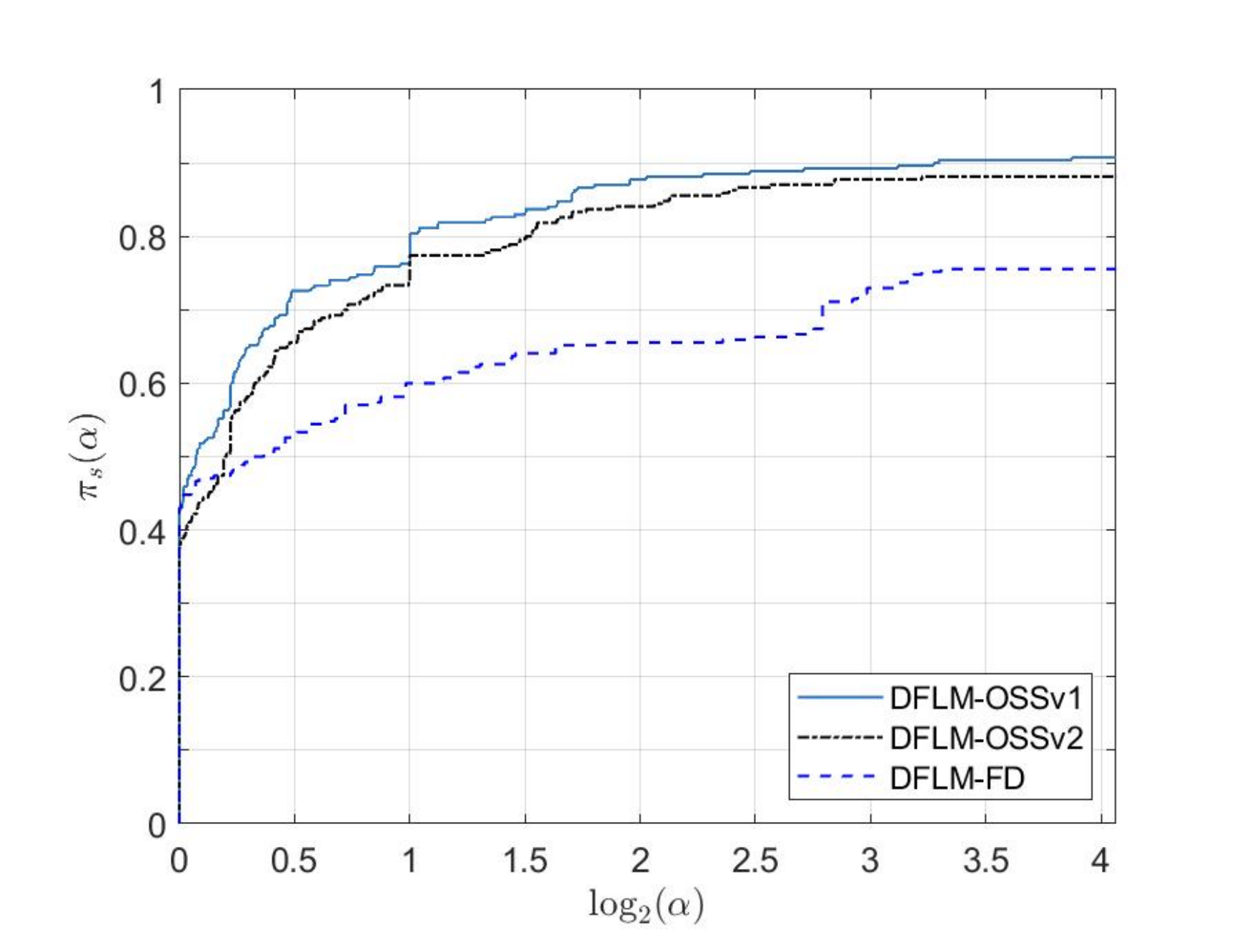}
			\caption{  $\tau=10^{-5}$ }
			\label{f2}
		\end{subfigure}
		\caption{Performance profile }
		\label{fcoby}
	\end{figure}
	
\textbf{Data Availability} Enquiries about data availability should be directed to the authors.

\textbf{Declarations} 

 \textbf{Conflict of interest} The authors have not disclosed any competing interests.

%
%
		

	\end{document}